\numberwithin{equation}{section}
\newtheorem{thm}{Theorem}[section]
\newtheorem{lm}[thm]{Lemma}
\newtheorem{cor}[thm]{Corollary}
\newtheorem{prop}[thm]{Proposition}
\theoremstyle{definition}
\newtheorem{qn}[thm]{Question}
\newtheorem{remark}[thm]{Remark}
\newtheorem{dfn}[thm]{Definition}
\newtheorem{claim}[thm]{Claim}
\newtheorem{defn-thm}[thm]{Definition-Theorem}
\newtheorem{conjecture}[thm]{Conjecture}
\begin{document}
\title{Harmonic projections in negative curvature}
\author{Ognjen To\v{s}i\'{c}}\address{
    Mathematical Institute\\
    University of Oxford\\
    United Kingdom
}
 
\begin{abstract}
    In this paper we construct harmonic maps that are at a bounded distance from nearest-point retractions to convex sets, in negatively curved manifolds. Specifically, given a quasidisk $Q$ in hyperbolic space, we construct a harmonic map to the hyperbolic plane that corresponds to the nearest-point retraction to the convex hull of $Q$. If $M$ is a pinched Hadamard manifold so that its isometry group acts with cobounded orbits, and if $S$ is a set in the boundary at infinity of $M$, with the property that all elements of its orbit under the isometry group of $M$ have dimension less than $\frac{n-1}{2}$, we show that the nearest-point retraction to the convex hull of $S$ is a bounded distance away from some harmonic map.
\end{abstract}
\maketitle
\section{Introduction}
Yau conjectured in \cite[Question 38]{yau-problems} that any simply connected, complete K\"ahler manifold with sectional curvature at most $-1$ admits a holomorphic map onto the disk.
A natural analogue of this question for general (not necessarily K\"ahler) Riemannian manifolds is as follows.
\begin{qn}\label{qn:main} Does any pinched Hadamard manifold admit a non-constant harmonic map to the hyperbolic plane $\mathbb{H}^2$?
\end{qn}
A manifold is pinched Hadamard if it is simply connected, complete and with sectional curvature bounded between two negative constants.
We give a partial positive answer to this question. Our basic strategy is as follows. 
\begin{enumerate}
    \item We start with some quasi-isometric embedding $\iota:\mathbb{H}^2\to M$. This defines a quasicircle $S$ in the boundary at infinity of $M$. A modification of the nearest-point retraction onto the convex hull of $S$ gives a map $r:M\to\mathbb{H}^2$.
    \item We deform the map $r$ to a harmonic map.
\end{enumerate} 
We conjecture that this strategy works in general. 
\begin{conjecture}\label{conj:nearest-point-retractions}
    Let $M$ be a pinched Hadamard manifold, and let $\iota:\mathbb{H}^2\to M$ be a quasi-isometric embedding. There exists a harmonic map $h:M\to\mathbb{H}^2$ such that $\sup_{x\in \mathbb{H}^2}\mathrm{dist}(x, h\circ\iota(x))<\infty$.
\end{conjecture}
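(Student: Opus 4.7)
The plan is to carry out the two-step strategy outlined above.

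\emph{Step 1 (Construction of $r$).} The quasi-isometric embedding $\iota$ extends to an embedding of boundaries realizing $\partial_\infty\mathbb{H}^2$ as a quasicircle $S\subset\partial_\infty M$. Let $C$ be the convex hull of $S$ in $M$. The Morse lemma for quasi-geodesics gives that $C$ lies at bounded Hausdorff distance from $\iota(\mathbb{H}^2)$, so a coarse inverse of $\iota$ induces a map $\rho:C\to\mathbb{H}^2$ that is bi-Lipschitz up to a bounded error, and $r = \rho\circ\pi_C$, where $\pi_C:M\to C$ is the nearest-point projection, is a Lipschitz map with $\iota\circ r$ at bounded distance from $\mathrm{id}_M$. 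After a mollification argument one may take $r$ to be smooth with bounded energy density and unchanged coarse properties.

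\emph{Step 2 (Harmonic deformation).} Evolve $r$ under the harmonic map heat flow $\partial_t h=\tau(h)$ with $h_0=r$. Since the target $\mathbb{H}^2$ has negative sectional curvature, the Eells--Sampson theory gives long-time existence and interior regularity, and subsequential limits as $t\to\infty$, if they exist, are harmonic. To keep $h_t$ at bounded distance from $r$, one exploits the convexity of the distance function in $\mathbb{H}^2$: the squared distance $u(x,t)=\tfrac12 d(h_t(x),r(x))^2$ satisfies, by a Bochner-type computation applied to the flow equation, a parabolic inequality of the schematic form
\[
(\partial_t-\Delta_M)u \;\le\; C\,\sqrt{u}\,|\tau(r)|(x).
\]
The maximum principle then controls $u$ by the convolution of $|\tau(r)|$ with the Green's function $G_M$ of $M$, reducing the uniform distance bound to the integrability condition
\[
\sup_{x\in M}\int_M G_M(x,y)\,|\tau(r)|(y)\,\mathrm{d}y < \infty.
\]
Given such a uniform bound, extracting a subsequential smooth limit produces the desired harmonic $h$.

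\emph{Main obstacle.} The decisive point is the verification of this integrability. The tension field $\tau(\pi_C)$ is controlled by the extrinsic geometry of $C$, i.e.\ by how badly $C$ fails to be totally geodesic, and its decay away from $C$ is governed by the fractal geometry of the quasicircle $S$ at infinity. In general quasicircles in $\partial_\infty M$ may have Hausdorff dimension arbitrarily close to $n-1$, and whether the resulting tension field is integrable against $G_M$ in the required uniform sense is the central analytic difficulty. The partial results proved later in the paper avoid this: either $M=\mathbb{H}^n$ with $S$ the boundary of a quasidisk, where explicit Poisson kernel and M\"obius computations give the required decay, or the orbit of $S$ under $\mathrm{Isom}(M)$ has dimension less than $(n-1)/2$, so that a direct volume argument suffices. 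The general conjecture seems to require either substantially sharper decay estimates for the tension field of $\pi_C$ along arbitrary quasicircles, or a different approximate-harmonic initial map altogether.
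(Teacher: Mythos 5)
This statement is labelled a conjecture, and the paper does not prove it in general: it is established only for $M=\mathbb{H}^n$ (Theorem \ref{thm:main-hyp-space}, via Gehring's bound on the dimension of quasicircles) and, for general $M$, only under the much stronger hypothesis $\overline{\dim}S<\frac{n-1}{2}$ (Theorem \ref{thm:main-hadamard}). Your proposal is likewise not a proof, and you say so; what you have written is an accurate reconstruction of the paper's two-step strategy together with a correct identification of where it gets stuck, namely the integrability of the Green's function against the tension field of the initial map. That diagnosis matches the paper: the dimension hypotheses enter only through Lemma \ref{lm:low-dim-exp-growth} and the heat-kernel computation bounding $\int_{N_C(K)}G(x,y)\,d\mathrm{vol}(y)$.

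Three points of divergence are worth recording. First, you deform via the harmonic map heat flow, whereas the paper solves Dirichlet problems on exhausting balls $B(x,d)$ and passes to a limit using Cheng's lemma, Schauder estimates and Arzel\`a--Ascoli, with the Schoen--Yau inequality $\Delta\,\mathrm{dist}(h_d,f)\ge-\norm{\tau(f)}$ and a maximum principle against a bounded supersolution $\Phi$; both routes hinge on the same analytic input. Second, your reduction asks for $\sup_x\int_M G(x,y)\norm{\tau(r)}(y)\,d\mathrm{vol}(y)<\infty$, a global condition, while the paper only needs $\sup_x\int_{N_C(K)}G(x,y)\,d\mathrm{vol}(y)<\infty$ on a fixed neighborhood of the convex hull: far from $K$ it constructs the required bounded subharmonic function explicitly from $\delta(x)=\mathrm{dist}(x,\tilde r(x))$ (Propositions \ref{prop:laplacian-distance} and \ref{prop:construct-harmonic}). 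This weakening is genuinely useful and you should incorporate it. Third, your description of the $\mathbb{H}^n$ case as resting on ``Poisson kernel and M\"obius computations'' is not what happens; the mechanism is Gehring's theorem that quasicircles have invariant upper Minkowski dimension strictly less than $n-1$, combined with the sharp heat kernel asymptotics on $\mathbb{H}^n$ (whose extra factor $e^{-\frac{n-1}{2}\rho}$ is precisely what upgrades the threshold from $\frac{n-1}{2}$ to $n-1$). Finally, a correction to the statement you are reproving: as literally written, $\sup_{x\in M}\mathrm{dist}(x,\iota\circ h(x))<\infty$ is impossible for $\dim M\ge 3$, since $\iota\circ h$ takes values in a quasi-isometric copy of $\mathbb{H}^2$; the intended condition, consistent with Theorem \ref{thm:main-hyp-space}, is $\sup_{x\in\mathbb{H}^2}\mathrm{dist}(x,h(\iota(x)))<\infty$. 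Your sentence asserting that $\iota\circ r$ is at bounded distance from $\mathrm{id}_M$ repeats this error and is false for the same reason; what is true (and what you actually use) is that $r$ is a coarse left-inverse of $\iota$.
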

It was shown in \cite{benoist-hulin-pinched-hadamard} by Benoist and Hulin that any quasi-isometry between pinched Hadamard manifolds is at a bounded distance from a harmonic map, building on the work of Markovi\'c in \cite{m-schoen}. Note that the nearest-point retraction onto convex subsets is not a quasi-isometry, so the nearest-point retraction is outside the scope of their work.
\par We solve Conjecture \ref{conj:nearest-point-retractions} for all hyperbolic spaces $\mathbb{H}^n$.  
\begin{thm}\label{thm:main-hyp-space}
    Let $\iota:\mathbb{H}^2\to\mathbb{H}^n$ be a quasi-isometric embedding. Then there exists a harmonic map $h:\mathbb{H}^n\to\mathbb{H}^2$ such that 
    \begin{align*}
        \sup_{x\in\mathbb{H}^2}\mathrm{dist}(x, h(\iota(x)))<\infty.
    \end{align*}
\end{thm}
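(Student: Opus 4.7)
My plan follows the two-step strategy sketched in the introduction: first, construct an initial smooth map $r\colon \mathbb{H}^n \to \mathbb{H}^2$ based on the nearest-point retraction to the convex hull of the quasicircle associated to $\iota$, and then deform $r$ to a harmonic map at bounded distance. The quasi-isometric embedding $\iota$ extends to a quasisymmetric embedding of $\partial \mathbb{H}^2$ into $\partial \mathbb{H}^n$ whose image is a quasicircle $Q$. Let $C := \mathrm{Hull}(Q) \subset \mathbb{H}^n$ denote its convex hull; by the Morse lemma, $C$ lies at bounded Hausdorff distance from $\iota(\mathbb{H}^2)$ and is quasi-isometric to $\mathbb{H}^2$ via a map $\psi\colon C \to \mathbb{H}^2$ with $\psi \circ \iota$ at bounded distance from $\mathrm{id}_{\mathbb{H}^2}$. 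Writing $\pi\colon \mathbb{H}^n \to C$ for the nearest-point projection, I would take $r$ to be a smoothing of $\psi \circ \pi$.

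I would then estimate the tension field of $r$. The key geometric input is that in negative curvature the distance to a convex set is uniformly convex outside the set, so the nearest-point retraction is a strict contraction in the normal directions with controlled rate. Combined with the quasidisk structure of $C$ (which controls how $\psi$ distorts the intrinsic geometry), this should yield a pointwise bound $\abs{\tau(r)} \leq K$ for a constant $K$ depending only on $n$ and the quasi-isometry constants of $\iota$. The required bounded-distance condition $\sup_{x \in \mathbb{H}^2} \mathrm{dist}(x, r(\iota(x))) < \infty$ is built into the construction, since $\iota(\mathbb{H}^2)$ lies in a bounded neighborhood of $C$, so $\pi \circ \iota$ is at bounded distance from $\iota$ and hence $r \circ \iota$ is at bounded distance from $\mathrm{id}_{\mathbb{H}^2}$.

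For the deformation step, I would solve the Dirichlet problem on an exhausting sequence of balls $B_R \subset \mathbb{H}^n$, obtaining harmonic maps $h_R\colon B_R \to \mathbb{H}^2$ with $h_R = r$ on $\partial B_R$, and extract a subsequential limit $h$ as $R \to \infty$. Existence of $h_R$ is standard since the target is a Hadamard manifold. The main obstacle is to establish the uniform bound
\begin{align*}
    \sup_R \sup_{x \in B_R} \mathrm{dist}(h_R(x), r(x)) < \infty.
\end{align*}
Following Markovic and Benoist--Hulin, one would use the Bochner-type composition formula for $\rho^2$, where $\rho(x) = \mathrm{dist}(h_R(x), r(x))$: harmonicity of $h_R$ and the curvature bound on $\mathbb{H}^2$ produce a differential inequality of the shape $\Delta \rho^2 \geq c \rho^2 - C \abs{\tau(r)} \rho$, and since $\rho \equiv 0$ on $\partial B_R$, the maximum principle gives $\sup \rho \leq C K / c$, independent of $R$. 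The hard part, compared with Benoist--Hulin, is that $r$ is not a quasi-isometry (it collapses large-dimensional fibers of $\pi$), so one cannot directly import their energy-density based estimates; the analysis must rely specifically on the uniform convexity of the distance function to $C$ and the fine geometry of $\mathbb{H}^n$ near a quasidisk.
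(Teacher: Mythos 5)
Your construction of the initial map (a smoothing of $\psi\circ\pi$, where $\pi$ is the nearest-point projection to the convex hull of the quasicircle and $\psi$ a quasi-inverse of $\iota$) matches the paper, and the Dirichlet-exhaustion scheme is also the right framework. But the heart of the argument — the uniform bound on $\mathrm{dist}(h_R,r)$ — is not established by what you wrote, and the inequality you invoke is not available. The coercive term $c\rho^2$ in $\Delta\rho^2\geq c\rho^2-C\abs{\tau(r)}\rho$ is exactly the Benoist--Hulin/Markovic mechanism that requires a positive lower bound on the energy density of the maps; here $\norm{\nabla r}$ decays to zero exponentially in $\mathrm{dist}(\cdot,C)$ (the retraction collapses everything transverse to the hull), so no such term exists. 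You acknowledge this at the end but offer only the vague statement that the analysis "must rely on the uniform convexity of the distance function," which is not a proof. Without the coercive term, the Schoen--Yau estimate gives only $\Delta\rho\geq-\norm{\tau(r)}$, and a merely bounded tension field ($\abs{\tau(r)}\leq K$, as you claim) is genuinely insufficient: there is no bounded function $\Phi$ on $\mathbb{H}^n$ with $\Delta\Phi\geq 1$ everywhere (the Green's function is not integrable over all of $\mathbb{H}^n$), so the maximum principle yields no $R$-independent bound.

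What the paper actually does, and what is missing from your proposal, is the following chain. First, the local Lipschitz constant of the nearest-point retraction decays like $e^{-\mathrm{dist}(\cdot,C)}$ (a $\mathrm{CAT}(-1)$ comparison argument), so after smoothing one gets the quantitative estimate $\norm{\tau(\tilde r)}\lesssim e^{-\mathrm{dist}(\cdot,C)}$ — exponential decay, not just boundedness. Second, one constructs a bounded function $\Phi$ with $\Delta\Phi\geq\norm{\tau(\tilde r)}$: far from $C$ this is done by hand as a function of $\mathrm{dist}(x,\tilde r(x))$, but on a fixed neighborhood $N_d(C)$ it requires the boundedness in $x$ of $\int_{N_d(C)}G(x,y)\,d\mathrm{vol}(y)$. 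Third, that Green's function bound is where the quasicircle structure is really used: by Gehring's theorem the invariant upper Minkowski dimension of $Q$ is some $\beta<n-1$, which gives the volume growth $\mathrm{vol}(B(x,\rho)\cap N_d(C))\lesssim e^{\beta\rho}$, and combining this with the sharp heat kernel bound $H(x,y,t)\lesssim(1+\rho^n)e^{-\rho^2/4t-(n-1)^2t/4-(n-1)\rho/2}$ on $\mathbb{H}^n$ yields the required integrability. None of these three steps — the exponential decay of the tension field, the construction of the bounded supersolution $\Phi$, and the dimension/volume/heat-kernel estimate that makes $\Phi$ exist near the hull — appears in your proposal, and the maximum principle step as you state it would fail.
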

Note that $\iota$ as in Theorem \ref{thm:main-hyp-space} defines a quasi-circle $S$ in the boundary at infinity $\partial_\infty \mathbb{H}^n$. An essential ingredient in the proof of Theorem \ref{thm:main-hyp-space} is the existence of a constant $\beta<n-1$ such that for any isometry $\gamma$ of $\mathbb{H}^n$, the Minkowski dimension of $\gamma S$ is at most $\beta$, which essentially follows from the work of Gehring \cite{gehring}. More precisely, we use the fact that the invariant upper Minkowski dimension of $S$ is less than $n-1$ (for an exact definition see \S\ref{section:setting}).
\begin{thm}\label{thm:main-hyp-space-full}
    Let $S$ be a set in the sphere at infinity of $\mathbb{H}^n$ with invariant upper Minkowski dimension less than $n-1$. Then there exists a harmonic map $h:\mathbb{H}^n\to\mathbb{H}^n$ that is a bounded distance away from the nearest-point retraction to the convex hull of $S$.
\end{thm}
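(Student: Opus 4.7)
The plan is to follow the template established by Markovic and Benoist--Hulin: construct a smooth initial approximation $u_0:\mathbb{H}^n\to\mathbb{H}^n$ at bounded distance from the nearest-point retraction $r$ to the convex hull $C$ of $S$, show that its tension field is uniformly bounded, and then deform $u_0$ to a harmonic map $h$ that stays at bounded distance from $u_0$. The main source of difficulty is that $r$ is only $1$-Lipschitz and collapses sharply in the directions transverse to $C$, so the quasi-isometric framework of Benoist--Hulin does not apply directly; the invariant Minkowski dimension hypothesis is exactly what compensates for this degeneracy.

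First I would define $u_0$ by mollifying $r$ at a fixed hyperbolic scale, for instance by convolving with a smooth compactly supported kernel in an exponential chart at each point. To estimate $\tau(u_0)(x)$ it then suffices to control the $C^2$-oscillation of $r$ on a unit hyperbolic ball around $x$, which in the shadow description of the retraction is governed by the diameter of the image of this ball under $r$. After conjugating by an isometry that sends $x$ to a reference point, this diameter is controlled by how much an isometric translate $\gamma S$ varies across a fixed-size piece of the boundary sphere. The invariant upper Minkowski dimension hypothesis provides a uniform power-type upper bound $\mathrm{vol}_{n-1}(N_\delta(\gamma S))\le C\delta^{n-1-\beta}$ with some $\beta<n-1$, independently of the isometry $\gamma$, and this in turn translates, after integrating in the radial direction, into $\|\tau(u_0)\|_{L^\infty}<\infty$ together with decay of $|\tau(u_0)|$ as $x$ moves away from $C$.

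Next I would deform $u_0$ to a harmonic map via the harmonic map heat flow $\partial_t u_t=\tau(u_t)$. Since the target $\mathbb{H}^n$ is a Hadamard manifold of pinched negative curvature, the function $t\mapsto d(u_t(x),u_0(x))$ is a subsolution of a heat-type equation whose source term is dominated by $|\tau(u_0)|$. A maximum-principle / Ahlfors-type argument in the style of Benoist--Hulin then bounds this distance uniformly in $t$ by a constant times $\|\tau(u_0)\|_\infty$, and passing to the limit $t\to\infty$ produces the required harmonic map $h$ at bounded distance from $u_0$, hence from $r$.

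The step I expect to be the main obstacle is this uniform $L^\infty$-control along the flow: the Benoist--Hulin estimates use crucially that the initial map is a quasi-isometry, whereas here $|du_0|$ is arbitrarily small far from $C$. One must therefore replace the bilipschitz input with an argument that only uses the tension-field bound and the non-positive curvature of the target. It is precisely here that the strict inequality $\dim_{\mathrm{inv}}(S)<n-1$ is essential: the quantitative decay of $|\tau(u_0)|$ off $C$ that it provides is what allows the distance estimate to be closed globally, rather than only on compact sets or in a neighbourhood of $C$.
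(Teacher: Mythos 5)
Your overall architecture (mollify $r$, bound the tension field, deform to a harmonic map while controlling the distance) matches the paper's, but there is a genuine gap in the deformation step, and you have misplaced where the hypothesis $\overline{\dim}\,S<n-1$ actually does its work. The exponential decay of $\norm{\tau(u_0)}$ away from $K=\mathrm{CH}(S)$ is \emph{not} a consequence of the dimension bound: it holds for every closed convex set in a pinched Hadamard manifold, because the local Lipschitz constant of the nearest-point retraction at distance $s$ from $K$ is $O(e^{-as})$ (Proposition \ref{prop:nearest-point-lipschitz} in the paper). Conversely, that decay alone does not let you close the maximum-principle estimate: in a neighbourhood $N_C(K)$ the tension field of the mollified retraction is merely of size $O(1)$ with no smallness, and on a Cartan--Hadamard manifold a uniform $L^\infty$ bound on $\tau(u_0)$ does not imply that the heat flow (or the exhaustion by Dirichlet problems) stays at bounded distance from $u_0$. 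What one actually needs is a bounded function $\Phi$ with $\Delta\Phi\geq\norm{\tau(u_0)}$, which amounts to showing
\begin{align*}
\sup_{x}\int_{N_C(K)}G(x,y)\,d\mathrm{vol}(y)<\infty .
\end{align*}
This is the step your sketch never confronts, and it is precisely here that the dimension hypothesis enters: it gives the volume growth bound $\mathrm{vol}(N_C(K)\cap B(x,\rho))\lesssim e^{\beta\rho}$ with $\beta<n-1$, uniformly over $x$ (this is where invariance under $\mathrm{Isom}(\mathbb{H}^n)$ is used), and this must be played against the sharp hyperbolic heat-kernel upper bound, whose factor $e^{-\frac{n-1}{2}\rho-\frac{\rho^2}{4t}-\frac{(n-1)^2}{4}t}$ beats $e^{\beta\rho}$ exactly when $\beta<n-1$, yielding $\int_{N_C(K)}H(x,y,t)\,d\mathrm{vol}(y)\lesssim e^{-\varepsilon t}$ and hence the Green's function bound after integrating in $t$.

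Without this Green's function (or equivalent barrier) input, your claim that the maximum principle bounds $d(u_t(x),u_0(x))$ by a constant times $\norm{\tau(u_0)}_\infty$ is false as stated; the correct bound is in terms of $\sup_x\int_M G(x,y)\norm{\tau(u_0)(y)}\,d\mathrm{vol}(y)$, and finiteness of that quantity is the whole point of the theorem's hypothesis. If you add the volume-growth lemma for $N_C(K)$ and the heat-kernel computation, your heat-flow variant of the deformation step would go through in essentially the same way as the paper's exhaustion argument.
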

\par In the general case of a pinched Hadamard $n$-manifold $M$, we are only able to deal with nearest-point retractions onto convex hulls of subsets of the boundary of dimension less than $\frac{n-1}{2}$. Given a metric space $X$, we call a subset $S\subseteq X$ cobounded if for some $C>0$, the $C$-neighborhood of $S$ is $X$.
\begin{thm}\label{thm:main-hadamard}
    Let $M$ be a pinched Hadamard manifold of dimension $n$, such that the isometry group of $M$ has cobounded orbits. Let $S\subseteq\partial_\infty M$ be a closed set in the boundary at infinity of $M$, with the invariant upper Minkowski dimension less than $\frac{n-1}{2}$. Then there exists a harmonic map $h:M\to M$ at a bounded distance from the nearest-point retraction to the convex hull of $S$.
\end{thm}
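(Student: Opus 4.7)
The plan is to extend the strategy used in Theorem~\ref{thm:main-hyp-space-full} to this more general setting. Let $r:M\to\mathrm{Conv}(S)$ denote the nearest-point retraction. Since $r$ is only Lipschitz, I would first produce a smooth map $r_0:M\to M$ at bounded distance from $r$ by averaging $r$ over unit balls via the barycenter (Karcher mean) construction, which is well-defined and smooth in a Hadamard manifold thanks to the strict convexity of the squared distance. The cobounded action of the isometry group is used here to produce uniform smoothing data across $M$.

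Next, I would produce candidate harmonic maps $h_R:B_R\to M$ on an exhaustion of $M$ by concentric balls, by solving the Dirichlet problem with boundary data $r_0\vert_{\partial B_R}$; existence and uniqueness is standard for Hadamard targets. The aim is to show that $\phi_R(x)=d(h_R(x),r_0(x))$ is uniformly bounded, independently of $R$. Following Markovic and Benoist--Hulin, one derives a differential inequality of the form
\begin{equation*}
    \Delta\phi_R^{\,2}\;\geq\;c\,\phi_R^{\,2}-C,
\end{equation*}
where $c>0$ comes from the upper negative curvature bound (via the Bochner/composition formula for the tension field and convexity of distance in the target), and $C$ is controlled by $\lVert\tau(r_0)\rVert_\infty$. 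Since $\phi_R$ vanishes on $\partial B_R$, the maximum principle gives $\phi_R\leq\sqrt{C/c}$ uniformly in $R$. Standard elliptic regularity for harmonic maps into negatively curved targets then provides uniform higher-derivative control on compact sets, and a diagonal extraction yields a harmonic limit $h:M\to M$ with $\sup_{x\in M}d(h(x),r(x))<\infty$.

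The step where the dimension hypothesis on $S$ enters decisively is the bound on the tension $\tau(r_0)$. Writing $r_0$ as a barycenter, one expresses its first and second covariant derivatives as averages of variations of $r$ over unit balls, which in turn reduce to integrals of Jacobi-field weights against the geometry of $\mathrm{Conv}(S)$ as seen from $x$. Translating a base point by the cobounded isometric action replaces $S$ by $\gamma S$ and the boundedness of $\lvert\tau(r_0)(x)\rvert$ reduces to a uniform convergence estimate for a singular integral whose natural dimensional threshold is a Minkowski-type content of $\gamma S$. The hypothesis $\dim_{\mathrm{inv}}S<\tfrac{n-1}{2}$ should be exactly what is needed to make this integral converge uniformly in $x$.

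The principal obstacle will be precisely this tension-field estimate. In $\mathbb{H}^n$ one has explicit Poisson kernels and horospherical coordinates which give the stronger threshold $n-1$ of Theorem~\ref{thm:main-hyp-space-full}; in a general pinched Hadamard manifold those explicit computations are unavailable, and one must rely on Rauch-comparison-style bounds on Jacobi fields, which introduce a loss and are responsible for the weaker $\tfrac{n-1}{2}$ exponent. Identifying the correct kernel produced by the barycenter construction, establishing sharp Jacobi-field bounds in pinched curvature, and verifying that the resulting singular integral is integrable under the invariant Minkowski-dimension hypothesis is the technical heart of the proof.
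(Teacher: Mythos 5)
There are two genuine gaps, and they are the two load-bearing points of the argument. The first is the claimed differential inequality $\Delta\phi_R^{\,2}\geq c\,\phi_R^{\,2}-C$. The Markovic/Benoist--Hulin mechanism that produces the coercive term $c\,\phi_R^{\,2}$ relies on the initial map being a quasi-isometry: the positive contribution comes from the Hessian of the distance function on the target paired with the differentials of the two maps, and it is bounded below only when those differentials are uniformly nondegenerate. The nearest-point retraction is the opposite of a quasi-isometry --- its local Lipschitz constant decays like $e^{-a\,\mathrm{dist}(x,K)}$ (Proposition \ref{prop:nearest-point-lipschitz}) --- so far from $K$ the differential of any smoothing $r_0$ is exponentially small and no quadratic term can be extracted; this is precisely why the Benoist--Hulin existence theorem does not apply here. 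The paper's substitute is the construction of a bounded function $\Phi$ with $\Delta\Phi\geq e^{-a\,\mathrm{dist}(\cdot,K)}\gtrsim\lVert\tau(\tilde r)\rVert$ (Proposition \ref{prop:construct-harmonic} away from $K$, Lemma \ref{lm:greens-estimate-implies-soln} near $K$); then $\mathrm{dist}(h_R,\tilde r)+C'\Phi$ is subharmonic by the \emph{linear} Schoen--Yau inequality $\Delta\,\mathrm{dist}(h_R,\tilde r)\geq-\lVert\tau(\tilde r)\rVert$, and the maximum principle bounds $\phi_R$ by $2C'\sup\lvert\Phi\rvert$. Your proposal has no replacement for $\Phi$, and without it the maximum principle gives nothing.

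The second gap is where the hypothesis $\overline{\dim}\,S<\frac{n-1}{2}$ enters. It is not needed for the tension field bound: $\lVert\tau(\tilde r)\rVert\lesssim e^{-a\,\mathrm{dist}(\cdot,K)}$ holds for the convex hull of an \emph{arbitrary} set, purely from the exponential decay of the Lipschitz constant of the retraction together with the Benoist--Hulin smoothing (Corollary \ref{cor:mollified-r-props}); no singular integral over $S$ and no Jacobi-field kernel appears. The dimension hypothesis is used only to make $\Phi$ bounded on a neighborhood of $K$, via the boundedness in $x$ of $\int_{N_C(K)}G(x,y)\,d\mathrm{vol}(y)$. That in turn follows from the volume growth $\mathrm{vol}(B(x,\rho)\cap N_C(K))\lesssim e^{a\beta\rho}$ (Lemma \ref{lm:low-dim-exp-growth}, which is where coboundedness of the isometry group is actually used) played against Davies' heat kernel upper bound and McKean's spectral gap $\lambda_1\geq\frac{(n-1)^2a^2}{4}$; completing the square in the exponent forces $\beta<\frac{n-1}{2}$. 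The loss relative to the threshold $n-1$ of Theorem \ref{thm:main-hyp-space-full} comes from the absence of the extra factor $e^{-\frac{n-1}{2}\rho}$ in the general heat kernel bound, not from Rauch-comparison losses in a barycenter construction.
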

In particular, when $M$ is the universal cover of a closed negatively curved manifold, the isometry group of $M$ has cobounded orbits.
\par In the Theorem \ref{thm:main-hadamard} above, we look for harmonic maps $M\to M$. From the proofs it will be clear that we could also construct harmonic maps $M\to Y$ when $Y\hookrightarrow M$ is quasi-isometrically embedded pinched Hadamard manifold with a similar condition on the invariant upper Minkowski dimension of the image of the boundary at infinity of $Y$.
\subsection{Outline}
The reason we are able to prove the stronger Theorem \ref{thm:main-hyp-space-full} for hyperbolic spaces is that in this case we have a precise estimate on the heat kernel. Apart from this, the proofs of Theorems \ref{thm:main-hyp-space-full} and \ref{thm:main-hadamard} are very similar, so the entire paper apart from \S\ref{section:proof} deals with the more general setting of Theorem \ref{thm:main-hadamard}. Hence let $S$ be a set in $\partial_\infty M$, and let $K$ be the convex hull of $S$. We let $r:M\to K$ be the nearest-point retraction. For the purposes of this outline, assume that $M$ has sectional curvature at most $-1$.
\par The proof consists of four steps.
\begin{enumerate}
    \item We construct a smooth map $\tilde{r}:M\to M$ that is at a bounded distance from $r$, so that its derivative and Hessian have the property $\norm{\nabla_x \tilde{r}}, \norm{H(\tilde{r})_x}\leq C e^{-\mathrm{dist}(x, K)}$, for some constant $C$. This $\tilde{r}$ is the result of a construction of Benoist and Hulin in \cite[\S 2.2]{benoist-hulin-pinched-hadamard}. Their exact statements do not apply here since $\tilde{r}$ is not a quasi-isometry, and moreover we need slightly stronger conclusions than they do. We summarize their construction in \S\ref{section:mollify-retraction}, and explain how it applies to $r$.
    \item Let $N_C(K)$ be the $C$-neighborhood of $K$, for some $C$ large. We show that assuming that the integral $\int_{N_C(K)} G(x,y)d\mathrm{vol}(y)$ of the Green's function $G(\cdot,\cdot)$ is bounded in $x$, there exists a bounded map $\Phi:M\to\mathbb{R}$ so that 
    \begin{align*}
        \Delta\Phi>\norm{\tau(\tilde{r})},
    \end{align*}
    where $\tau(\tilde{r})$ denotes the tension field of $\tilde{r}$ (the exact definition will be given in \S\ref{section:setting}).
    This is the content of Lemma \ref{lm:greens-estimate-implies-soln}. We use the assumption that the integral of the Green's function is bounded to construct $\Phi$ on $N_C(K)$. On $M\setminus N_C(K)$, we construct $\Phi$ as a suitable function of the distance $\mathrm{dist}(x, \tilde{r}(x))$.
    \item Denote the ball centered at $x$ of radius $d$ by $B(x, d)$. We construct harmonic maps $h_d:B(x, d)\to M$ that agree with $\tilde{r}$ on $\partial B(x,d)$. Then by an estimate of Schoen and Yau \cite{schoen-yau-laplacian-distance} on the Laplacian of the distance between smooth maps, we get
    \begin{align*}
        \Delta\left(\mathrm{dist}(h_d(x), \tilde{r}(x))+\Phi\right)>0.
    \end{align*}
    By the maximum principle we see that $\mathrm{dist}(h_d(x), \tilde{r}(x))\leq 2\sup_x\abs{\Phi(x)}$. Since this bound is uniform in $d$, a compactness argument shows that we can take a limit of $h_d$ as $d\to\infty$ to get a harmonic map that is at a bounded distance from $\tilde{r}$. This argument is in the proof of Corollary \ref{cor:finish-from-green}. This step essentially appears in the work of Donnelly \cite[Lemma 3.1]{donelly}.
    \item To finish the proof of Theorems \ref{thm:main-hyp-space-full} and \ref{thm:main-hadamard}, we only need to verify that $\int_{N_C(K)} G(x,y)d\mathrm{vol}(y)$ is bounded in $x$ for arbitrarily large $C$. We first show the bound
    \begin{align*}
            \mathrm{vol}(N_C(K)\cap B(x, \rho))\leq C' \exp\left(\left(\overline{\dim} S+\varepsilon\right)\rho\right),
        \end{align*}
        for all $x\in M, \varepsilon>0$, and some constant $C'$ that does not depend on $x$ or $\rho$.
    This estimate is shown in Lemma \ref{lm:low-dim-exp-growth}. We then use some classical estimates on the heat kernel $H:M\times M\times[0,\infty)\to \mathbb{R}$ (see Proposition \ref{prop:kernel-estimates}) and the fact that $G(x,y)=\int_0^\infty H(x,y,t)dt$. The computations combining these estimates are in \S\ref{subsection:main-hyp-space-full} and \S\ref{subsection:main-hadamard}.
\end{enumerate} 
\par The assumptions that $\overline{\dim} S<n-1$ and $\overline{\dim}S<\frac{n-1}{2}$ from Theorems \ref{thm:main-hyp-space-full} and \ref{thm:main-hadamard}, respectively, are only used when applying Lemma \ref{lm:low-dim-exp-growth} to obtain an estimate on the integral of the Green's function.
\par The proof of Theorem \ref{thm:main-hyp-space} also follows the outline above. The only difference is that we need to construct an initial map $\mathbb{H}^n\to\mathbb{H}^2$, and show that quasicircles have invariant upper Minkowski dimension less than $n-1$. All of this is done in \S\ref{subseciton:main-hyp-space}. 
\subsection{Acknowledgements}
I would like to thank Vladimir Markovi\'c for introducing me to Question \ref{qn:main} and for his continued advice and support while working on this project. I would also like to thank the anonymous referee for numerous comments that have improved the clarity of the paper.
\section{Preliminaries and notation}\label{section:setting}
Let $M$ be a pinched Hadamard manifold, that is a simply connected complete Riemannian manifold of dimension $n$ with sectional curvatures $K_M$ with $-b^2\leq K_M\leq -a^2$, for some fixed constants $0<a\leq b$. We assume that the isometry group of $M$ has cobounded orbits.  Recall that the group action $G$ on $X$ has cobounded orbits if for any point $x\in X$, there is a constant $C>0$ such that the $C$-neighborhood of the orbit $G\cdot x$ is $X$. 
\par These will be standing assumptions throughout the paper. In particular this holds whenever $M$ is the universal cover of some closed negatively curved manifold.
\par We denote by $\mathrm{dist}(\cdot, \cdot)$ the path metric on $M$, and by $[x,y]$ the geodesic segment connecting $x$ to $y$. We denote by $B(x, r)$ the ball centered at $x$ of radius $r$ in $M$, and by $N_d(Y)=\bigcup_{y\in Y} B(y, d)$ the $d$-neighborhood of the set $Y\subseteq X$. For a set $S\subseteq M$, we denote by $\mathrm{CH}(S)$ its convex hull, that is the intersection of all convex sets containing $S$.
\par We write $f\lesssim g$ when there exists a constant $C>0$ such that $f\leq Cg$. When it is not clear from context, we will specify what $C$ is allowed to depend on. We write $f\gtrsim g$ for $g\lesssim f$ and $f\approx g$ for $f\lesssim g\lesssim f$.
\subsection{Harmonic maps, Green's function and the heat kernel}
For a smooth map $h:X\to Y$ between Riemannian manifolds, we denote by $\nabla h$ its derivative, and by $H(h)$ its Hessian. Note that $\nabla h$ is a $h^*TY$-valued 1-form on $X$, and that $H(h)$ is a $h^*TY$-valued symmetric bilinear form on $X$.
\begin{dfn}
    For a smooth map $h:X\to Y$ between Riemannian manifolds, we define its tension field to be $\tau(h)=\mathrm{tr} H(h)$. The function $h$ is harmonic if $\tau(h)=0$. When $N=\mathbb{R}$, we denote $\Delta h=\tau(h)$.
\end{dfn}
\par We now recall the definitions of the heat kernel and Green's function. For more detailed information, the reader can consult the book by Grigor'yan \cite[Chapters 7, 11]{Grigoryan2012HeatKA}.
\begin{dfn}
    The heat kernel $H:\mathbb{R}\times M\times M\to\mathbb{R}$ is the unique smooth function such that for any smooth compactly supported function $f_0:M\to\mathbb{R}$, the function  
    \begin{align*}
        f_t(x)=\int_M H(t, x, y) f_0(y) d\mathrm{vol}(y)
    \end{align*} 
    is the solution to $\frac{\partial f_t}{\partial t}=\Delta f_t$, and has the property that $f_t\to f_0$ as $t\to 0$.
\end{dfn}
\begin{dfn}
    The Green's function is defined as \begin{align}
        G(x, y)=\int_0^\infty H(t, x, y)dt,\label{eq:green-conv}
    \end{align}
    whenever the right-hand side converges.
\end{dfn}
By \cite[Theorem 13.17]{Grigoryan2012HeatKA}, $G$ is the fundamental solution to the Laplace's equation whenever it is finite. It is well-known that Green's functions exist on all complete noncompact manifolds without boundary (e.g. \cite{greens-fn-exist}). We will in particular show in \S\ref{subsection:heat-kernel-estimates} that in our setting, the integral in (\ref{eq:green-conv}) converges. 
\subsection{Visual metrics and upper invariant Minkowski dimension}
\par Denote by $\partial_\infty M$ the boundary at infinity of $M$, that is the set of geodesic rays in $M$ up to the equivalence relation of having finite Hausdorff distance (for a more detailed account of the theory of boundaries of negatively curved spaces, the reader may wish to consult \cite{Kapovich2002BoundariesOH}). We set $\overline{M}=M\cup\partial_\infty M$, and extend the notation $\mathrm{CH}(S)$ and $[x, y]$ for $S\subseteq\overline{M}$ and $x, y\in\overline{M}$.
\par We equip $\partial_\infty M$ with the family of visual metrics $\mathrm{dist}_x^\mathrm{vis}(\cdot, \cdot)$ indexed by $x\in M$, given by
\begin{align*}
    \mathrm{dist}_x^\mathrm{vis}(y, z)\approx e^{-a\mathrm{dist}(x, [y, z])}.
\end{align*}
\begin{remark} Note that for general Gromov hyperbolic metric spaces, the visual metrics can only be defined as $\mathrm{dist}_x(y,z)\approx e^{-\kappa\mathrm{dist}(x, [y,z])}$, for some $\kappa>0$ small enough. However since $M$ is a $\mathrm{CAT}(-a^2)$ space, such a metric exists whenever $0<\kappa\leq a$, \cite[\S 2.4]{bourdon1993actions}. \end{remark}
\par The appropriate notion of dimension we will use for subsets of $\partial_\infty M$ is defined below. For a subset $S$ of some metric space $(X, d)$, we denote by $N_d(S, \varepsilon)$ the smallest number of $\varepsilon$-balls needed to cover $S$.
\begin{dfn}\label{dfn:minkowski-moving-bp}
    If $M$ is a pinched Hadamard manifold, for $S\subseteq \partial_\infty M$, the invariant upper Minkowski dimension of $S$, denoted $\overline{\dim} S$, is the infimum of all $d\geq 0$ with the property that there exists a constant $C$ such that  
    \begin{align*}
        N_{\mathrm{dist}_x^\mathrm{vis}}(S, \varepsilon)\leq C\varepsilon^{-d},
    \end{align*}
    for all $x\in M$ and $\varepsilon>0$. 
\end{dfn}
If we fix some arbitrary base point $o\in M$, and write $\mathrm{dist}^\mathrm{vis}=\mathrm{dist}^\mathrm{vis}_o$, the Definition \ref{dfn:minkowski-moving-bp} is equivalent to the definition below.
\begin{dfn}
    For a set $S\subseteq\partial_\infty M$, the upper-invariant Minkowski dimension is the infimum of all $d$ such that there exists a consant $C>0$ with the property that 
    \begin{align*}
        N_{\mathrm{dist}^\mathrm{vis}}(S, \varepsilon)\leq C\varepsilon^{-d}\text{ for all }\gamma\in\mathrm{Isom}(M)\text{ and }\varepsilon>0.
    \end{align*}
\end{dfn}
\section{Deforming the nearest-point retraction to a smooth map}\label{section:mollify-retraction}
In this section we deform the nearest-point retraction $r:M\to K$ to a convex set $K$ to a smooth map $\tilde{r}$ with $\sup_{x\in M}\mathrm{dist}(r(x), \tilde{r}(x))<\infty$, so that 
\begin{gather*}
    \norm{\nabla \tilde{r}}\lesssim e^{-a\mathrm{dist}(\cdot, K)},\\
    \norm{H(\tilde{r})(X, X)}\lesssim e^{-a\mathrm{dist}(\cdot, K)} \norm{X}^2.
\end{gather*}
In \S\ref{subsection:lipschitz}, we describe how to modify any Lipschitz map $f:X\to Y$ to a smooth map where the first two derivatives at $x\in X$ are controlled by the local Lipschitz constant of $f$ near $x$. This is essentially a restatement of the results of \cite[\S 2.2]{benoist-hulin-pinched-hadamard} suitable for our purposes. In \S\ref{subsection:local-lipschitz} we show that the local Lipschitz constant of the nearest-point retraction decays exponentially with the distance from the convex set.
\subsection{Deforming Lipschitz maps to smooth maps}\label{subsection:lipschitz}
We do this by the methods of Benoist and Hulin in \cite[\S 2.2]{benoist-hulin-pinched-hadamard}. We collect their results as Lemma \ref{lm:benoist-hulin}. 
\begin{lm}\label{lm:benoist-hulin}
    Let $f:X\to Y$ be a Lipschitz map between pinched Hadamard manifolds $X$ and $Y$. Then there exists a smooth map $\tilde{f}:X\to Y$ at a bounded distance from $f$ and a polynomial $P$ with non-negative coefficients and $P(0)=0$ such that whenever $x\in X$ and $f$ is $L$-Lipschitz in a neighborhood of $x$, we have 
    \begin{align*}
        \norm{\nabla_x \tilde{f}}\leq P(L)\text{ and }\norm{H(\tilde{f})_x}\leq P(L).
    \end{align*}
\end{lm}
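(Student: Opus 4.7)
The plan is to construct $\tilde f$ as the Riemannian center of mass of $f$ averaged against a smooth, compactly-supported kernel concentrated near $x$, essentially the construction from \cite[\S 2.2]{benoist-hulin-pinched-hadamard}, and then to extract the required derivative estimates by implicit differentiation of the defining equation.

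Fix a smooth non-negative function $\psi$ on $[0,\infty)$ supported in $[0,R]$ for some fixed $R>0$, with $\psi(0)>0$, and set
\[
    \phi_x(y):=\frac{\psi(\mathrm{dist}(x,y))}{Z(x)},\qquad Z(x):=\int_X \psi(\mathrm{dist}(x,y))\,d\mathrm{vol}(y).
\]
Jacobi field comparison, combined with the pinched curvature bounds on $X$, makes $Z(x)$ bounded above and below and the $x$-derivatives of $\phi_x(y)$ uniformly bounded in $x$. Define
\[
    F(x,z):=\int_X \exp_z^{-1}(f(y))\,\phi_x(y)\,d\mathrm{vol}(y)\in T_zY,
\]
and let $\tilde f(x)$ be the unique $z\in Y$ with $F(x,z)=0$. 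Since $Y$ is CAT$(0)$, the second-moment function $z\mapsto \int_X \tfrac12\mathrm{dist}(z,f(y))^2\phi_x(y)\,d\mathrm{vol}(y)$ is strictly convex with a unique minimizer, and $F$ is minus its $z$-gradient; existence, uniqueness, and smoothness of $\tilde f$ follow from the implicit function theorem, noting that $\partial_z F(x,\tilde f(x))$ is a Hessian bounded below by the identity.

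Because $\mathrm{supp}(\phi_x)\subseteq B(x,R)$, the pushforward $f_\ast(\phi_x\,d\mathrm{vol})$ is supported in the ball of radius $L_0 R$ around $f(x)$, where $L_0$ is the global Lipschitz constant of $f$; the barycenter $\tilde f(x)$ lies in this ball, giving the bounded distance claim. To bound derivatives at a point where $f$ is $L$-Lipschitz on $B(x,R)$, I would differentiate $F(x,\tilde f(x))\equiv 0$. The $x$-derivative $\partial_x F$ at $(x,\tilde f(x))$ integrates $\exp_{\tilde f(x)}^{-1}(f(y))$ against derivatives of $\phi_x$ in $x$; the former has norm $\lesssim L$ on $\mathrm{supp}(\phi_x)$ (using that $\tilde f(x)$ is a weighted mean of $f(B(x,R))$) and the latter is uniformly bounded, so $\norm{\partial_x F}\lesssim L$. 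Inverting $\partial_z F$, whose operator norm is bounded uniformly in both directions by Hessian comparison on $Y$, produces $\norm{\nabla_x\tilde f}\lesssim L$. Differentiating once more decomposes $H(\tilde f)_x$ into (i) a term pairing second $x$-derivatives of $\phi_x$ with the $O(L)$ vector $\exp_{\tilde f(x)}^{-1}(f(y))$, (ii) bilinear terms in $\nabla_x\tilde f$, each already $O(L)$, and (iii) Jacobi-field corrections from differentiating $\exp^{-1}$ twice in its basepoint, polynomial in $\norm{\nabla_x\tilde f}$ and in the curvature of $Y$. Collecting these yields $\norm{H(\tilde f)_x}\leq P(L)$ for a polynomial $P$ with non-negative coefficients and $P(0)=0$.

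The main technical obstacle is the bookkeeping for the curvature-dependent corrections arising when differentiating $\exp_z^{-1}(f(y))$ twice in its basepoint $z$; the pinched curvature bounds on $Y$ are used both to keep $\partial_z F$ uniformly invertible and to control the Jacobi-field distortion, ensuring the resulting estimates are polynomial in $L$ with $P(0)=0$, which reflects the fact that $\tilde f$ is locally constant wherever $f$ is.
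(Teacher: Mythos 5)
Your argument is correct in outline, but it is \emph{not} the construction of Benoist--Hulin that the paper follows: their \S 2.2 (and the proof given here) smooths $f$ by iteratively coning it off to a constant on balls centred at a maximal $\tfrac r2$-separated net, using a cutoff $\chi_r$ in the ``almost linear'' charts of their Lemma 2.6 and the decomposition of the net into $N_0$ well-separated pieces; the derivative bounds then come out as iterated compositions $P^{\circ i}(L)$ of a fixed polynomial. What you propose instead is the Karcher center-of-mass mollification: average $f$ against a kernel $\phi_x$ and take the barycenter in $Y$, then differentiate the defining identity $F(x,\tilde f(x))=0$ implicitly. This is a genuinely different and perfectly viable route. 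It is more global and avoids the combinatorial net decomposition and the chart-by-chart bookkeeping, and it yields the polynomial $P$ in one pass; the price is that it leans harder on the Hadamard structure of the target (uniform $2$-convexity of $\mathrm{dist}(\cdot,p)^2$ to invert $\partial_zF$, and Jacobi-field/curvature bounds to control the first two basepoint derivatives of $\exp_z^{-1}$), whereas the paper's route needs only the almost-linear coordinates. Your key quantitative inputs are sound: since $F(x,\tilde f(x))=0$ and $\int\partial_x\phi_x\,d\mathrm{vol}=0$, the vectors $\exp_{\tilde f(x)}^{-1}(f(y))$ have norm $\lesssim LR$ on $\mathrm{supp}\,\phi_x$, which gives $\norm{\partial_xF}\lesssim L$ and hence $\norm{\nabla_x\tilde f}\lesssim L$, and the second differentiation produces only terms that are $O(L)$ or $O(L^2)$, so $P(0)=0$ holds (indeed $\nabla_x\tilde f$ and $H(\tilde f)_x$ vanish where $f$ is constant on $B(x,R)$).

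Three minor points you should make explicit. First, for $\phi_x(y)$ to be smooth across the diagonal you should take $\psi$ constant near $0$, or work with a smooth function of $\mathrm{dist}(x,y)^2$ (which is globally smooth on a Hadamard manifold); otherwise second $x$-derivatives of $\phi_x$ are problematic at $y=x$. Second, ``$L$-Lipschitz in a neighborhood of $x$'' must be read as $L$-Lipschitz on $B(x,R)$ for the fixed radius $R$ of your kernel, since $\tilde f(x)$ sees all of $f|_{B(x,R)}$; the paper's proof has the identical caveat with radius comparable to $r$, and the application to the nearest-point retraction is insensitive to this. Third, your constants (e.g.\ the bounds on $\partial_z\partial_zF$ and on the basepoint derivatives of $\exp^{-1}$) depend on the global Lipschitz constant of $f$ through the diameter of $f(B(x,R))$; this is permitted, since $P$ is quantified after $f$ in the statement.
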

\begin{proof}
    This is the result of \cite[Lemma 2.8]{benoist-hulin-pinched-hadamard} and a slight strengthening of \cite[Lemma 2.7]{benoist-hulin-pinched-hadamard}. We state these results as propositions below, and combine them as in \cite[Proof of Proposition 2.4. Second step]{benoist-hulin-pinched-hadamard}.
    \par We say that a subset $S\subset X$ is $r$-separated if for all $x, y\in S$, we have $\mathrm{dist}(x, y)\geq r$.
    \begin{prop}[Lemma 2.8 in \cite{benoist-hulin-pinched-hadamard}]\label{prop:decompose}
        There exist constants $r_0>0$ and $N_0\in\mathbb{Z}_{>0}$ such that for each $r<r_0$, any $r$-separated subset of $X$ can be decomposed into at most $N_0$ disjoint subsets, each of which is $4r$-separated.
    \end{prop}
    \begin{prop}[Strengthening of Lemma 2.7 in \cite{benoist-hulin-pinched-hadamard}]\label{prop:mollify-locally}
        Let $g:X\to Y$ be a map between pinched Hadamard manifolds. Then for all $r>0$ small enough, there exists a family of maps $g_{r, x}:X\to Y$ indexed by $x\in X$, such that
        \begin{align*}
            g_{r, x}(z)&=g(x)\text{ when }\mathrm{dist}(x, z)\leq\frac{r}{2},\\
            g_{r, x}&=g\text{ on }X\setminus B(x, r).
        \end{align*} 
        Moreover 
        \begin{align*}
            \mathrm{Lip}(g_{r, x}|_{B(x, r)})\lesssim \mathrm{Lip}(g|_{B(x, r)}),
        \end{align*}
        and if $g$ is $C^2$ in some neighborhood of $x$, then so is $g_{r, x}$ with 
        \begin{align*}
            \norm{H(g_{r, x})_z}\lesssim \mathrm{Lip}(g|_{B(x, r)})+\mathrm{Lip}(g|_{B(x, r)})^2+\norm{H(g)_z},
        \end{align*}
        for each $z$ in that neighborhood, where the implied constant depends on $r$.
    \end{prop}
    \begin{proof}
        We use the coordinates given by the following Proposition to construct $g_{r, x}$.
        \begin{prop}[Lemma 2.6 in \cite{benoist-hulin-pinched-hadamard}]\label{prop:almost-linear-coordinates}
            There exist constants $r_0>0$ and $c_0>1$ such that for any $y\in Y$, there exists a chart $\Phi_y:B(y, r_0)\to U_y\subseteq\mathbb{R}^{\dim Y}$ such that $\Phi_y(y)=0$ and 
            \begin{align*}
                \norm{\nabla\Phi_y}, \norm{\nabla \Phi_y^{-1}}, \norm{H(\Phi_y)}, \norm{H(\Phi_y^{-1})}\leq c_0.
            \end{align*}
            Here $U_y\subseteq\mathbb{R}^{\dim Y}$ is given the standard Euclidean metric. In particular, we have for $r<r_0$,
            \begin{align*}
                B\left(0, \frac{r}{c_0}\right)\subseteq\Phi_y(B(y, r))\subseteq B(0, c_0r).
            \end{align*}
        \end{prop}
        Let $\chi:\mathbb{R}\to [0,1]$ be a smooth function with $\chi|_{[-\frac{1}{2}, \frac{1}{2}]}=0$, and $\chi|_{\mathbb{R}\setminus[-1, 1]}=1$. Write $\chi_r(x)=\chi(\frac{x}{r})$, and let $\Phi_\cdot$ be the coordinates given by Proposition \ref{prop:almost-linear-coordinates}. We define 
        \begin{align*}
            g_{r, x}(z)&=\left\lbrace\begin{matrix}
                g(x) & \text{if }\mathrm{dist}(x, z)\leq \frac{r}{2}\\
                \Phi_x^{-1}\left(\chi_r(\mathrm{dist}(x, z))\Phi_x(g(z))\right) & \text{if }\frac{r}{2}\leq\mathrm{dist}(x, z)\leq r\\
                g(z)&\text{otherwise}
            \end{matrix}\right..
        \end{align*}
        Note that this is the exact same function as in \cite[Lemma 2.7]{benoist-hulin-pinched-hadamard}, and is well-defined when $c_0^2r\mathrm{Lip}(g)<r_0$. In this proof, denote by $d(z)=\mathrm{dist}(x, z)$, for ease of notation. We have for $\frac{r}{2}\leq d(z)\leq r$, 
        \begin{align*}
            \nabla_z g_{r, x}&=\nabla_{\chi_r(d(z))\Phi_x(g(z))}\Phi_x^{-1} \left(\chi_r'(d(z))\left(\nabla_z d\right) \Phi_x(g(z))+\chi_r(d(z))\nabla_{g(z)}\Phi_x \nabla_z g\right),
        \end{align*}
        so 
        \begin{align*}
            \norm{\nabla_z g_{r, x}}&\lesssim \norm{\Phi_x(g(z))}+\norm{\nabla_z g}\lesssim \norm{\nabla_z g}.
        \end{align*}
        Taking one more derivative, we see that 

        \begin{align*}
            \norm{H(g_{r, x})_z}\lesssim\norm{\nabla_z g}^2+\norm{\nabla_z g}+\norm{H(g)_z}.
        \end{align*}
        The result now follows from $\norm{\nabla_z g}\lesssim \mathrm{Lip}(g|_{B(x, r)})$. 
    \end{proof}
    The rest of the proof is completely analogous as \cite[Proof of Proposition 2.4. Second step]{benoist-hulin-pinched-hadamard}. Let $r>0$ be small enough to be chosen later. Let $X_0$ be a maximal $\frac{r}{2}$-separated subset of $X$. By Proposition \ref{prop:decompose} we can write $X_0=X_1\cup X_2\cup\cdots\cup X_{N_0}$ where each $X_i$ is $2r$-separated. Define $f_0=f$, and set 
    \begin{align*}
        f_i(z)&=\left\lbrace\begin{matrix} (f_{i-1})_{r, x}(z) & \text{if }z\in B(x, r)\text{ for some }x\in X_i\\
        f_{i-1}(z) & \text{otherwise} \end{matrix}\right.
    \end{align*}
    Set $\tilde{f}=f_{N_0}$. Then since each point $z\in X$ is in $B(x, \frac{r}{2})$ for some $x\in X_0$ (by maximality of $X_0$), some $f_i$ is locally constant near $x$ and hence $f_{N_0}$ is smooth at $x$. 
    \par By Proposition \ref{prop:mollify-locally}, whenever $f$ has Lipschitz constant at most $L$ near $x\in X$, we have 
    \begin{align*}
        \norm{\nabla_x \tilde{f}}\lesssim L,\\
        \norm{H(\tilde{f})_x}\leq \sum_{i=0}^{N_0} P^{\circ i}(L),
    \end{align*}
    where $P(z)=\Lambda(z+z^2)$ for some large enough constant $\Lambda>0$. The result follows.
\end{proof}
\subsection{Local Lipschitz constant of the nearest-point retraction}\label{subsection:local-lipschitz}
We now show that the local Lipschitz constant of the nearest-point retraction to $K$ decays exponentially with the distance from $K$. This is a basic result in $\mathrm{CAT}(-a^2)$ geometry, and is probably not new.
\begin{prop}\label{prop:nearest-point-lipschitz}
    Let $K$ be a closed convex subset of $M$, and let $r:M\to K$ be the nearest-point retraction. Then its restriction $r:M\setminus N_s(K)\to K$ has Lipschitz constant at most $Ce^{-as}$, where we equip $M\setminus N_s(K)$ with its induced path metric, denoted $\mathrm{dist}_{M\setminus N_s(K)}(\cdot, \cdot)$, for some constant $C$ depending only on $M$.
\end{prop}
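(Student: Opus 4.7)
The plan is a Saccheri-type quadrilateral comparison against the model space $\mathbb{H}^2_{-a^2}$. Fix $x, y \in M \setminus N_s(K)$ with $\mathrm{dist}(x, y)$ small, and set $p = r(x)$, $q = r(y)$, $d_0 = \mathrm{dist}(p, q)$. The goal is to show $d_0 \lesssim e^{-as}\mathrm{dist}(x, y)$, from which the Lipschitz bound in the induced path metric follows by chaining along short subarcs.

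First, I reduce to $\mathrm{dist}(x, p) = \mathrm{dist}(y, q) = s$. Since $M$ is $\mathrm{CAT}(0)$, the distance function to $K$ is convex, so $\overline{N_s(K)}$ is itself convex and its nearest-point retraction $r_s$ is $1$-Lipschitz. As $r_s(x)$ lies on the geodesic $[p, x]$ and still projects to $p$ under $r$ (and similarly for $y$), replacing $x, y$ by $r_s(x), r_s(y)$ leaves $p, q$ unchanged and does not increase $\mathrm{dist}(x, y)$. The first-variation property of nearest-point retractions onto convex sets then yields $\angle_p(x, q), \angle_q(y, p) \geq \pi/2$.

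Next, I split the geodesic quadrilateral $xpqy$ along the diagonal $[p, y]$, apply $\mathrm{CAT}(-a^2)$ triangle comparison to each resulting triangle, and glue the two comparison triangles in $\mathbb{H}^2_{-a^2}$ along their shared side. The glued quadrilateral $\bar x \bar p \bar q \bar y$ has the same four side lengths as the original, so in particular $|\bar x \bar y| = \mathrm{dist}(x, y)$. The angle at $\bar p$ in the glued polygon equals the sum of the two comparison-triangle angles at $\bar p$, each of which dominates the corresponding angle in $M$; combined with the $\mathrm{CAT}(0)$ angle triangle inequality $\angle_p(x, q) \leq \angle_p(x, y) + \angle_p(y, q)$, this forces the base angles at $\bar p, \bar q$ to be at least $\pi/2$. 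A standard Saccheri computation in $\mathbb{H}^2_{-a^2}$ then gives
\begin{align*}
\cosh(a\,\mathrm{dist}(x, y)) \geq 1 + (\cosh(a d_0) - 1)\cosh^2(as),
\end{align*}
which, using $\cosh(as) \gtrsim e^{as}$ and $\cosh(a d_0) - 1 \gtrsim d_0^2$ for small $d_0$, rearranges to $d_0 \lesssim e^{-as}\,\mathrm{dist}(x, y)$.

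The main subtlety is verifying that the glued polygon in $\mathbb{H}^2_{-a^2}$ really is a comparison quadrilateral with base angles at least $\pi/2$: one has to combine two separate triangle comparisons across the diagonal $[p, y]$ while keeping track of how the interior angles add up. This can be handled by a careful application of Alexandrov's lemma, or alternatively by invoking Reshetnyak's majorization theorem for the closed polygon $[x, p] \cup [p, q] \cup [q, y] \cup [y, x]$.
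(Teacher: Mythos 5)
Your argument is correct, and it takes a genuinely different route from the paper's, even though both ultimately rest on the two right-angle conditions at $r(x)$ and $r(y)$ and on comparison with $\mathbb{H}^2(-a^2)$. You first prove a \emph{local} contraction estimate — after using convexity of $\overline{N_s(K)}$ to normalize to $\mathrm{dist}(x,p)=\mathrm{dist}(y,q)=s$, a Saccheri-type quadrilateral comparison gives $\mathrm{dist}(r(x),r(y))\lesssim e^{-as}\mathrm{dist}(x,y)$ for nearby points — and then you integrate this along any path avoiding $N_s(K)$ to get the path-metric Lipschitz bound. The paper instead works globally with the path: it transfers a chain of points along a near-shortest path in $M\setminus N_s(K)$, together with the triangle $r(x)\,y\,r(y)$, into a glued comparison polygon in the disk with $r(x)$ at the origin, observes that the transferred path stays outside $B(0,s)$, and uses the fact that a curve outside $B(0,s)$ subtending angle $\theta$ at the origin has length at least $\sinh(s)\,\theta\approx e^{as}\,\mathrm{dist}(r(x),r(y))$. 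Your version buys a cleaner modular structure (the local statement is essentially the pointwise bound $\norm{\nabla r}\lesssim e^{-as}$, and the path-metric claim follows formally by chaining), at the cost of having to justify the glued quadrilateral comparison; you correctly flag this as the delicate point, and Reshetnyak majorization of the closed quadrilateral (which produces a convex quadrilateral in $\mathbb{H}^2(-a^2)$ with the same side lengths and angles at $\bar p,\bar q$ no smaller than those in $M$) settles it cleanly. One small point worth making explicit when you write this up: with angles only known to be $\geq\pi/2$ rather than exactly $\pi/2$, the displayed Saccheri inequality needs the standard monotonicity of the opposite side in the enclosed angle (hyperbolic law of cosines), applied at each base vertex in turn.
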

\begin{proof}
    Let $x, y\in M\setminus N_s(K)$ be such that $\mathrm{dist}(r(x), r(y))\lesssim s$, where the implicit constant will be chosen later. Let $x=x_0, x_1,x_2,...,x_n=y$ be points in $M$ on the shortest path between $x$ and $y$ in $M\setminus N_s(K)$, such that
    \begin{enumerate}
        \item $[x_i, x_{i+1}]\cap N_s(K)=\emptyset$ for $i=0,1,...,n-1$, and
        \item $\sum_{i=0}^{n-1}\mathrm{dist}(x_i, x_{i+1})<2\mathrm{dist}_{M\setminus N_s(K)}(x, y)$.  
    \end{enumerate} 
    Note in particular that $\mathrm{dist}([x_i, x_{i+1}], r(x))\geq s$. We note that since $K$ is convex, we have $[r(x), r(y)]\subseteq K$. Since $r$ is the nearest-point retraction, we have $\angle([r(x), x], [r(x), r(y)]),\angle([r(y), y], [r(y), r(x)])\geq\pi/2$.
    \par Pick comparison triangles for $r(x)x_ix_{i+1}$ for $i=0,1,...,n-1$ and for $r(x)yr(y)$ in the 2-dimensional space $\mathbb{H}(-a^2)$ of constant curvature $-a^2$ (these exist by \cite{triangle-comp}). We glue them appropriately to get a hyperbolic polygon in $\mathbb{H}(-a^2)$. We work in the disk model, and we can suppose without loss of generality that $r(x)$ corresponds to the origin. Suppose that $x$, $y$ and $r(y)$ correspond to $A, B, C\in\mathbb{D}$, respectively. We know that
    \begin{enumerate}
        \item the angle between $[0,A]$ and $[0, C]$ is at least $\pi/2$, 
        \item the angle between $[C, 0]$ and $[C, B]$ is at least $\pi/2$, 
        \item $\mathrm{dist}(0, C)=\mathrm{dist}(r(x), r(y))$, $\mathrm{dist}(0, A), \mathrm{dist}(0, B)\geq s$, and 
        \item $\sum_{i=0}^{n-1}\mathrm{dist}([x_i, x_{i+1}])\geq \mathrm{dist}_{\mathbb{D}\setminus B(0, s)}(A, C)$ and hence $\mathrm{dist}_{M\setminus N_s(K)}(x, y)\gtrsim \mathrm{dist}_{\mathbb{D}\setminus B(0, s)}(A, B)$.
    \end{enumerate}
    \begin{claim}
        We have $\mathrm{dist}_{\mathbb{D}\setminus B(0, s)}(A, C)\gtrsim e^{as}\mathrm{dist}(0, C)$. 
    \end{claim}
    \begin{proof}
        We can rescale the metric so that $a=1$ and we are working in $\mathbb{H}^2(-1)=\mathbb{H}^2$. We can suppose without loss of generality that $\mathrm{dist}(0, A)=\mathrm{dist}(0, B)=s$, and that the angles $\angle([0, A], [0,C])=\angle([C,0], [C, B])=\frac{\pi}{2}$. We note that $\angle([0, A], [0,B])\approx \mathrm{dist}(0, C)$ for $\mathrm{dist}(0, C)$ small enough, and since the metric on $\mathbb{D}$ is $4\frac{dr^2+r^2d\theta^2}{(1-r^2)^2}$ in polar coordinates, we have 
        \begin{align*}
            \mathrm{dist}_{\mathbb{D}\setminus B(0,s)}(A, C)&\geq \frac{2\tanh \frac{s}{2}}{1-\tanh^2\frac{s}{2}}\angle([0, A], [0, B])\\
            &\approx \mathrm{dist}(0, C)\sinh s\approx e^s \mathrm{dist}(0, C).
        \end{align*}
    \end{proof}
    From the Claim we have 
    \begin{align*}
        \mathrm{dist}_{M\setminus N_s(K)}(x, y)&\gtrsim \mathrm{dist}_{\mathbb{D}\setminus B(0,s)}(A, B)\gtrsim e^{as}\mathrm{dist}(0, C)=e^{as}\mathrm{dist}(r(x), r(y)).
    \end{align*}
    It follows that $\mathrm{dist}(r(x),r(y))\lesssim e^{-as}\mathrm{dist}_{M\setminus N_s(K)}(x, y)$. 
\end{proof}
Applying Lemma \ref{lm:benoist-hulin} to the nearest-point retraction gives the following corollary.
\begin{cor}\label{cor:mollified-r-props}
    For any convex subset $K\subseteq M$ there exists a map $\tilde{r}:M\to K$ with $\sup_{x\in M}\mathrm{dist}(r(x), \tilde{r}(x))<\infty$ so that 
    \begin{gather*}
        \norm{\nabla\tilde{r}}\lesssim e^{-a\mathrm{dist}(\cdot, K)},\\
        \norm{H(\tilde{r})(X, X)}\lesssim e^{-a\mathrm{dist}(\cdot, K)}\norm{X}^2,
    \end{gather*}
    for any vector field $X$.
\end{cor}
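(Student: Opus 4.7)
The plan is to apply Lemma \ref{lm:benoist-hulin} to the nearest-point retraction, viewed as a Lipschitz map $r:M\to M$, using Proposition \ref{prop:nearest-point-lipschitz} to feed in the correct local Lipschitz constant at each point. Since the nearest-point projection onto a convex subset of a nonpositively curved space is non-expansive, $r$ is globally $1$-Lipschitz, so Lemma \ref{lm:benoist-hulin} immediately produces a smooth $\tilde r$ with $\sup_x \mathrm{dist}(r(x), \tilde r(x))<\infty$ and with $\norm{\nabla_x\tilde r}, \norm{H(\tilde r)_x} \leq P(1)$ everywhere. The content of the corollary is the stronger exponentially decaying bound at points far from $K$.

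Fix a small radius $\rho>0$, independent of $x$, and let $d=\mathrm{dist}(x,K)$. When $d>2\rho$, the ball $B(x,\rho)$ lies inside $M\setminus N_{d-\rho}(K)$, and on such a ball the induced path metric on $M\setminus N_{d-\rho}(K)$ coincides with the ambient Riemannian metric (no shortcut can dip into $N_{d-\rho}(K)$ without first exiting $B(x,\rho)$). Proposition \ref{prop:nearest-point-lipschitz} therefore implies that $r|_{B(x,\rho)}$ has ambient Lipschitz constant at most $Ce^{-a(d-\rho)}=(Ce^{a\rho})\,e^{-ad}$. Plugging this into Lemma \ref{lm:benoist-hulin} yields
\begin{align*}
\norm{\nabla_x \tilde r},\ \norm{H(\tilde r)_x}\leq P\bigl(C'e^{-ad}\bigr).
\end{align*}
Since $P$ has nonnegative coefficients and $P(0)=0$, we have $P(t)\lesssim t$ uniformly for $t$ in any bounded interval $[0,T]$, and thus the right-hand side is $\lesssim e^{-ad}$. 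For the remaining compact range $d\leq 2\rho$, the factor $e^{-ad}$ is bounded below, so the uniform bound $P(1)$ obtained from $1$-Lipschitzness is absorbed into the implicit constant.

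The only slightly delicate step is translating between the path metric on $M\setminus N_s(K)$, in which Proposition \ref{prop:nearest-point-lipschitz} is phrased, and the ambient metric on $M$, which is what Lemma \ref{lm:benoist-hulin} consumes; restricting to a fixed small ambient ball that stays well inside the complement of the $(d-\rho)$-neighborhood makes the two metrics identical on this ball, at the cost of only the constant factor $e^{a\rho}$. Finally, to match the letter of the statement $\tilde r:M\to K$, we may post-compose with the nearest-point retraction onto $K$: this changes $\tilde r$ by at most a bounded amount (since $\tilde r$ is already a bounded distance from $K$) and preserves the derivative and Hessian bounds up to a constant.
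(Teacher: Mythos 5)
Your proposal is correct and is exactly the paper's (essentially unstated) argument: apply Lemma \ref{lm:benoist-hulin} to $r$, feeding in the local Lipschitz bound from Proposition \ref{prop:nearest-point-lipschitz}, and your observation that small ambient balls in $M\setminus N_{d-\rho}(K)$ are convex (so the induced path metric agrees with the ambient one there) correctly supplies the one detail the paper leaves implicit. The only misstep is the final post-composition with the nearest-point retraction onto $K$ to force the codomain to be $K$: that retraction is merely Lipschitz, so composing with it would destroy the $C^2$ regularity needed for the Hessian bound; the codomain ``$K$'' in the statement should simply be read as $M$ (with $\tilde r$ at bounded distance from $K$), which is all that is used later.
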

\begin{remark}\label{remark:hessian-nabla-bounds-indep-of-k}
    Note that the constants in this corollary depend only on $M$, and are in particular independent of $K$. 
\end{remark}
\section{Reduction to an integral estimate of Green's function}
\label{section:harmonic-finite-shells}
In this section we show that, assuming $x\to\int_{U} G(x,y)d\mathrm{vol}(y)$ is bounded for a fixed large neighborhood $U$ of $K$, there exists a bounded subharmonic map $\Phi:M\to\mathbb{R}$ with the bound on the Laplacian 
\begin{align*}
    \Delta\Phi\geq e^{-a\mathrm{dist}(\cdot, K)}.
\end{align*} 
To construct $\Phi$ on $U$, we use the assumption on the integral of Green's function. On $M\setminus U$, we construct $\Phi$ as some function of the distance $\delta(x)=\mathrm{dist}(x, \tilde{r}(x))$. Hence to bound $\Delta\Phi$, we need bounds on the Laplacian and derivative of $\delta$. This essentially follows from the work of Benoist and Hulin \cite[Remark 4.6]{benoist-hulin-convex}, but we include a different proof in \S\ref{subsection:props-distance} as Proposition \ref{prop:laplacian-distance} for completeness. We then finish the construction of $\Phi$ in \S\ref{subsection:proof-greens-estimate-implies-soln}. 
\par Suppose now we are given a bounded subharmonic function $\Phi:M\to\mathbb{R}$ with $\Delta\Phi\geq e^{-a\mathrm{dist}(\cdot, K)}$. Suppose we are given a function $f:M\to N$ with $\norm{\tau( f)}\lesssim e^{-a\mathrm{dist}(\cdot, K)}$. On any ball $B(x, R)$, we can construct a harmonic map $h_R:B(x,R)\to N$ with $h_R=f$ on $\partial B(x, R)$. An estimate by Schoen and Yau from \cite{schoen-yau-laplacian-distance} shows that 
\begin{align*}
    \Delta\mathrm{dist}(f,g)\geq -\norm{\tau(f)}-\norm{\tau(g)},
\end{align*}
for any smooth functions $f,g:M\to N$. In fact we use a general formula from which Schoen and Yau derive this estimate in \S\ref{subsection:props-distance} to bound the Laplacian of $\delta(x)=\mathrm{dist}(x, \tilde{r}(x))$. 
\par Using this formula we see that $\Delta(\mathrm{dist}(h_R, f)+C\Phi)>0$ for some large constant $C$. Therefore $\mathrm{dist}(h_R, f)+C\Phi\leq C\sup{\abs{\Phi}}$ by the maximum principle. It follows that $\mathrm{dist}(h_R, f)$ is bounded uniformly in $R$. A classical argument combining Cheng's lemma, Schauder elliptic estimates and Arzela-Ascoli theorem then shows that we can take the limit of such harmonic maps as $R\to\infty$, to get a harmonic map $h_\infty:M\to N$ at a bounded distance from $f$. The details of this argument are in \S\ref{subsection:from-phi-to-harmonic-map}.
\par This part of the argument is similar to something that appears in the work of Donnelly \cite{donelly}. Specifically, it is shown in \cite[Lemma 3.1]{donelly} that given a function $f:M\to N$ and a bounded non-negative map $\Phi:M\to\mathbb{R}$ with $\Delta\Phi>\norm{\tau(f)}$, there exists a harmonic map $h:M\to N$ such that \[\mathrm{dist}(h(x), f(x))\leq\sup_x \Phi(x).\] In \cite{donelly}, $\Phi$ is constructed using an assumption on the integral of the Green's function, in a way completely analogous to how we construct $\Phi$ on a large neighborhood of $K$. However here we are able to construct $\Phi$ far away from $K$ without any assumptions on $K$ or the Green's function, by Proposition \ref{prop:construct-harmonic}.
\subsection{Properties of the distance function}\label{subsection:props-distance}
We set $\delta(x)=\mathrm{dist}(x, \tilde{r}(x))$. We will consider $\delta$ on $M\setminus N_C(K)$, for some $C$ large to be chosen later. In particular, we let $C$ be large enough so that $x\neq \tilde{r}(x)$ for all $x\in M\setminus N_C(K)$.
\begin{prop}\label{prop:laplacian-distance}
    For some $C>0$ large enough, the distance function $\delta$ is smooth on $M\setminus N_C(K)$ and has 
    \begin{align*}
        \Delta\delta\gtrsim 1\text{ and }\norm{\nabla\delta}\lesssim 1.
    \end{align*}
\end{prop}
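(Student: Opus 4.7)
The plan is to exploit the exponential decay of $\nabla\tilde r$ and $H(\tilde r)$ on $M\setminus N_C(K)$ guaranteed by Corollary~\ref{cor:mollified-r-props}: far from $K$, the map $\tilde r$ is nearly constant near $x$, so $\delta$ should behave to leading order like $z\mapsto\mathrm{dist}(z,\tilde r(x))$ with $\tilde r(x)$ held fixed. For this model function the classical Hessian comparison in curvature $\leq -a^2$ gives $\Delta\geq(n-1)a\coth(a\delta)\geq(n-1)a$ and gradient of norm $1$, so all of the work is in quantifying the error coming from the $x$-dependence of $\tilde r(x)$ and absorbing it by taking $C$ large.

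Smoothness is immediate once $C$ exceeds the bounded quantity $\sup_x\mathrm{dist}(r(x),\tilde r(x))$: then $x\neq\tilde r(x)$ on $M\setminus N_C(K)$, and $\mathrm{dist}$ is smooth off the diagonal of $M\times M$ by absence of conjugate points. For the gradient, first variation applied to the family $t\mapsto(\exp_x(tv),\tilde r(\exp_x(tv)))$ yields
\[
d\delta_x(v)=-\langle v,\gamma'(0)\rangle+\langle d\tilde r_x(v),\gamma'(\delta(x))\rangle,
\]
where $\gamma$ is the unit-speed geodesic from $x$ to $\tilde r(x)$, whence $\|\nabla\delta\|_x\leq 1+\|\nabla\tilde r\|_x\lesssim 1$.

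For the Laplacian I will fix an orthonormal frame $e_1,\dots,e_n$ at $x$ and set $c_i(t)=\exp_x(te_i)$, $w_i(t)=\tilde r(c_i(t))$; since $c_i$ is a geodesic, $c_i''(0)=0$ and $w_i''(0)=H(\tilde r)_x(e_i,e_i)$. Writing $D(p,q)=\mathrm{dist}(p,q)$ on $M\times M$, the chain rule gives
\[
\mathrm{Hess}(\delta)_x(e_i,e_i)=\mathrm{Hess}_{(x,\tilde r(x))}(D)\bigl((e_i,d\tilde r_x e_i),(e_i,d\tilde r_x e_i)\bigr)+\bigl\langle\gamma'(\delta(x)),H(\tilde r)_x(e_i,e_i)\bigr\rangle.
\]
Summing over $i$ isolates the principal term $\sum_i\mathrm{Hess}_x(D(\cdot,\tilde r(x)))(e_i,e_i)=\Delta_x D(\cdot,\tilde r(x))\geq(n-1)a$ and leaves off-diagonal and $d\tilde r$-quadratic contributions from $\mathrm{Hess}(D)$ plus the acceleration term. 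Each of these errors will be controlled by $\|\nabla\tilde r\|_x+\|\nabla\tilde r\|_x^2+\|H(\tilde r)_x\|\lesssim e^{-aC}$, so choosing $C$ large produces $\Delta\delta\geq(n-1)a/2\gtrsim 1$.

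The main technical point is the uniform upper bound on $\mathrm{Hess}(D)$ on $M\times M$: the diagonal blocks $\mathrm{Hess}_x(D(\cdot,y))$ and $\mathrm{Hess}_y(D(x,\cdot))$ are bounded by Rauch comparison against the lower curvature bound $K\geq -b^2$, and the mixed block is reduced to the diagonal ones by Cauchy--Schwarz, using that $D$ is jointly convex in a Hadamard manifold and hence $\mathrm{Hess}(D)$ is a PSD quadratic form off the diagonal. This is the only step that genuinely uses two-sided curvature pinching rather than the upper bound $K\leq -a^2$ alone.
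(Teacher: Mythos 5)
Your argument is correct and is essentially the paper's: both compare $\Delta\delta$ to $\Delta\,\mathrm{dist}(\cdot,\tilde r(x))\gtrsim 1$ and absorb the error terms, which are controlled by $\norm{\nabla\tilde r}+\norm{\nabla\tilde r}^2+\norm{H(\tilde r)}\lesssim e^{-aC}$, by taking $C$ large. The only cosmetic differences are that the paper runs the computation for $\delta^2$ via the Schoen--Yau formula rather than for $\delta$ directly, and you supply your own (valid) bound on the mixed block of $\mathrm{Hess}(D)$ via positive semidefiniteness and Cauchy--Schwarz where the paper simply quotes the formula from \cite{schoen-yau-laplacian-distance}.
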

\begin{proof}
    It is well-known that $\mathrm{dist}(\cdot, \cdot):M^2\to\mathbb{R}$ is smooth away from the diagonal, so the smoothness of $\delta$ follows from that of $\tilde{r}$.
    \par Since $\norm{\nabla\tilde{r}}$ is finite, $\tilde{r}$ is Lipschitz, and hence so is $\delta(x)=\mathrm{dist}(x, \tilde{r}(x))$. Therefore $\norm{\nabla\delta}<\infty$. 
    \par The rest of this proof is estimating $\Delta\delta$. We compare $\delta$ near some arbitrary point $x_0$ to the function $\mathrm{dist}(\cdot, \tilde{r}(x_0))$, using standard estimates on the Hessian of the distance function. The computation is analogous to that of Schoen and Yau \cite{schoen-yau-laplacian-distance}.
    \par We fix a large $C>0$ such that $\tilde{r}(x)\neq x$ for $x\in M\setminus N_{C}(K)$. Fix a point $x_0\in M\setminus N_C(K)$ and set $p_0=\tilde{r}(x_0)$. We first state a calculus claim, proved by a straightforward computation that we omit.
    \begin{claim}\label{claim:lap-composition}
        Let $f:A\to B$ be a smooth map between Riemannian manifolds, and let $g:B\to\mathbb{R}$ be a smooth function. Then for $x\in A$, 
        \begin{align*}
            \Delta_x(g\circ f)=dg_{f(x)}(\tau(f)_x)+\mathrm{tr}\left((d_xf)^* H(g)_{f(x)}\right).
        \end{align*}
    \end{claim}
    \begin{remark}
        We note that here $H(g)_{f(x)}$ is a symmetric bilinear form on $T_{f(x)}B$, that we pull back by $d_x f:T_x A\to T_{f(x)}B$ to get a symmetric bilinear form on $T_x A$. The trace then refers to the Riemannian metric on $A$. 
    \end{remark}
    We apply Claim \ref{claim:lap-composition} to the maps $(\mathrm{id}_X, \tilde{r}):M\setminus N_C(K)\to M^2$ and $\mathrm{dist}:M^2\to\mathbb{R}$. This yields 
    \begin{gather}\label{eq:lap-delta}
        \Delta_{x_0}\delta=\mathrm{dist}_*(\tau(\tilde{r})_{x_0})+\sum_\alpha H(\mathrm{dist})_{(x_0, p_0)}(e_\alpha\oplus\tilde{r}_*e_\alpha, e_\alpha\oplus\tilde{r}_*e_\alpha),
    \end{gather}
    where $e_\alpha$ is some orthonormal basis for $T_{x_0}M$. Note that $e_\alpha\oplus \tilde{r}_*e_\alpha\in T_{x_0}M\oplus T_{p_0}M\cong T_{(x_0, p_0)}(M^2)$. Applying Claim \ref{claim:lap-composition} once again to the map $\mathrm{id}_X\times \mathrm{const}_{p_0}:M\to M\times\{p_0\}\subset M\times M$ (here by $\mathrm{const}_{p_0}$ we denote the constant map $M\to \{p_0\}\subset M$), we get 
    \begin{align}\label{eq:lap-dist-p0}
        \Delta_{x_0} \mathrm{dist}(\cdot, p_0)=\sum_\alpha H(\mathrm{dist})_{(x_0, p_0)}(e_\alpha\oplus 0, e_\alpha\oplus 0).
    \end{align} 
    Note that from \cite[Lemma 2.3]{benoist-hulin-pinched-hadamard}, we see that \begin{align}\label{eq:hessian-bound}
        \norm{H(\mathrm{dist})_{(x_0, p_0)}}\leq b\coth(bC).
    \end{align}
    Here the norm is defined relative to the Riemannian metric on $T_{x_0}M\oplus T_{p_0}M$ by $\norm{H}=\sup_{\norm{X}=1} \abs{H(X, X)}$.  Subtracting (\ref{eq:lap-delta}) from (\ref{eq:lap-dist-p0}), using (\ref{eq:hessian-bound}) with the fact that $\norm{\tilde{r}_*e_\alpha}\lesssim e^{-aC}$, we see that 
    \begin{align*}
        \abs{\Delta_{x_0}\delta - \Delta_{x_0} \mathrm{dist}(
            p_0, \cdot
        )}\lesssim e^{-aC}(1+b^2\coth^2(bC))\to 0\text{ as }C\to\infty.
    \end{align*}
    It is well-known that $\Delta_{x_0}\mathrm{dist}(p_0, \cdot)\geq a$ (see e.g. \cite[Lemma 2.5]{Benoist2017HarmonicQM}), so for $C>0$ large enough, we have $\Delta_{x_0}\delta>a/2$ for $x_0\in M\setminus N_C(K)$.
\end{proof}
\subsection{Constructing bounded subharmonic functions}\label{subsection:proof-greens-estimate-implies-soln}
We use the following Proposition to construct $\phi$ on $N_{d+1}(K)\setminus N_d(K)$.
\begin{prop}\label{prop:construct-harmonic}
    Let $S\subseteq\partial_\infty M$ and let $K=\mathrm{CH}(S)$. For all $d>d_0$, where $d_0=d_0(M)$ is some constant depending only on $M$, there exists a bounded subharmonic function $\phi_d:M\to\mathbb{R}$ such that 
    \begin{align*}
        \Delta\phi_d\geq 1\text{ on }N_{d+1}(K)\setminus N_d(K),
    \end{align*}
    so that $\sup_x \abs{\phi_d(x)}$ does not depend on $d$.
\end{prop}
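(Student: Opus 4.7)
The plan is to set $\phi_d = f_d \circ \rho$, where $\rho(x) := \mathrm{dist}(x, K)$ and $f_d$ is a real function to be chosen. Since $K$ is closed and convex in the negatively curved manifold $M$, $\rho$ is convex and $1$-Lipschitz on $M$ and smooth on $M \setminus K$, and standard Hessian comparison with the model space of constant curvature $-a^2$ provides constants $C_0, c_1 > 0$ (depending only on the curvature bound and the dimension) such that $\Delta \rho \geq c_1$ on $M \setminus N_{C_0}(K)$; this is a mild variant of the pointwise estimate $\Delta\,\mathrm{dist}(\cdot, p) \gtrsim 1$ invoked in the proof of Proposition~\ref{prop:laplacian-distance}. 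Using $\|\nabla \rho\|^2 = 1$ on $M \setminus K$, the chain rule gives
\begin{align*}
\Delta \phi_d \;=\; f_d''(\rho) + f_d'(\rho)\,\Delta \rho
\end{align*}
wherever $\rho$ is smooth.

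Take $f_d(t) := g(t - d)$ for a fixed smooth template $g \colon \mathbb{R} \to [0, \infty)$ with the following properties: (i) $g \equiv 0$ on $(-\infty, -\eta]$ for some $\eta > 0$; (ii) $g' \geq 0$ everywhere and $g'' \geq 1$ on $[0, 1]$; (iii) $g'' + c_1 g' \geq 0$ on $\mathbb{R}$; (iv) $\sup g < \infty$. Choose $d$ large enough that $d - \eta \geq C_0$. Property (i) forces $\phi_d \equiv 0$ on $N_{d-\eta}(K)$, so $\phi_d$ is smooth on all of $M$ despite the non-smoothness of $\rho$ at $K$. Wherever $\rho \geq d - \eta > C_0$, the lower bound on $\Delta \rho$ combined with (ii) and (iii) gives $\Delta \phi_d \geq g''(\rho - d) + c_1 g'(\rho - d) \geq 0$, so $\phi_d$ is subharmonic globally. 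On the shell $N_{d+1}(K) \setminus N_d(K) = \{d < \rho \leq d+1\}$, the bound $g''(\rho - d) \geq 1$ together with $g'(\rho - d), \Delta \rho \geq 0$ yields $\Delta \phi_d \geq 1$. Finally $\sup_M |\phi_d| = \sup g$ is independent of $d$.

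Producing the template $g$ is a short one-variable exercise: fix small $\eta, \varepsilon > 0$ and prescribe $g'(t) = 0$ on $(-\infty, -\eta]$, $g'(t) = t + \eta$ on $[-\eta, 1 + \varepsilon]$, and $g'(t) = (1 + \eta + \varepsilon)\,e^{-(c_1/2)(t - 1 - \varepsilon)}$ on $[1 + \varepsilon, \infty)$, then integrate from $-\infty$. The resulting $g$ is continuous and bounded by $(1+\eta+\varepsilon)^2/2 + 2(1+\eta+\varepsilon)/c_1$, has $g'' \equiv 1$ on $(-\eta, 1 + \varepsilon)$, and satisfies $g'' + c_1 g' \geq 0$ piecewise (on the exponential tail the identity $g'' = -(c_1/2)\,g'$ gives $g'' + c_1 g' = (c_1/2)\,g' \geq 0$); since $g'$ is continuous at the two junctions, a mollification at scale less than $\min(\eta, \varepsilon)$ produces a smooth template still satisfying (i)--(iv). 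The only content beyond this routine bookkeeping is the Hessian comparison for the distance to a general closed convex set in negative curvature, which is standard, so there is no genuine obstacle.
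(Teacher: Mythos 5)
Your overall strategy---composing a one-variable profile with a distance-to-$K$ type function and arranging a differential inequality for the profile---is the same as the paper's, and your one-variable template $g$ is fine (your way of getting $\Delta\phi_d\geq 1$ on the shell from the second-order term $g''(\rho-d)\norm{\nabla\rho}^2$ with $\norm{\nabla\rho}=1$ is if anything cleaner than the paper's, which gets it from the first-order term $f'(\delta)\Delta\delta$ with $f'\equiv 1$ on the shell). The gap is in the regularity input. You take $\rho=\mathrm{dist}(\cdot,K)$ and assert that it is smooth on $M\setminus K$ with a classical Hessian comparison $\Delta\rho\geq c_1$. For a general closed convex set $K$---here $K=\mathrm{CH}(S)$ for an arbitrary closed $S\subseteq\partial_\infty M$, whose boundary can be quite irregular---the distance function is only $C^1$ outside $K$ (its gradient is the unit vector pointing away from the foot point, continuous because the projection is continuous), but it is not $C^2$ in general: already for a convex polygon in $\mathbb{R}^2$ the second derivatives of the distance function jump across the rays emanating from the corners. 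So $\Delta\rho$ is not classically defined, the chain rule computation $\Delta\phi_d=f_d''(\rho)+f_d'(\rho)\Delta\rho$ does not literally make sense, and the ``standard Hessian comparison'' you invoke is a statement about distance to a point or to a $C^2$ convex hypersurface, not to an arbitrary closed convex set. One can try to repair this by working with barriers or distributional Laplacians (convexity gives $\Delta\rho\geq 0$ as a measure, and one can build smooth lower barriers at each point to get the quantitative bound), but that is genuinely more delicate than ``routine bookkeeping,'' and the subsequent use of $\phi_d$ inside the maximum-principle argument of Corollary \ref{cor:finish-from-green} would then also have to be run in a weak sense.

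This regularity issue is precisely why the paper does not use $\mathrm{dist}(\cdot,K)$: it uses instead $\delta(x)=\mathrm{dist}(x,\tilde r(x))$, where $\tilde r$ is the smooth mollified retraction of Corollary \ref{cor:mollified-r-props}. Since $\tilde r$ is smooth and $\mathrm{dist}:M\times M\to\mathbb{R}$ is smooth off the diagonal, $\delta$ is genuinely smooth on $M\setminus N_C(K)$, and Proposition \ref{prop:laplacian-distance} supplies $\Delta\delta\gtrsim 1$ and $\norm{\nabla\delta}\lesssim 1$ via the Schoen--Yau composition formula together with the classical bound $\Delta\,\mathrm{dist}(\cdot,p)\gtrsim 1$ for a single point $p$. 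If you replace your $\rho$ by $\delta$ (adjusting your template to allow $\norm{\nabla\delta}^2\leq B$ rather than $=1$, exactly as the paper's condition $Au+B\min(u',0)\geq 0$ does), your argument goes through and coincides with the paper's proof.
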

\begin{proof}
Let $f:[0,\infty)\to[0,\infty)$ be a $C^2$ function. Then 
\begin{align*}
    \Delta (f\circ\delta)=f'(\delta)\Delta \delta+f''(\delta)\norm{\nabla\delta}^2.
\end{align*}
By Proposition \ref{prop:laplacian-distance} we can suppose $\Delta\delta\geq A$ and $\norm{\nabla\delta}^2\leq B$, for some positive constants $A,B$. We will construct $\phi_d$ as $f\circ\delta$, for a suitable function $f$.
\begin{claim} There exists a $C^1$ function $u:\mathbb{R}\to[0,\infty)$ be a $C^1$ function with the following properties:
    \begin{enumerate}
        \item We have $u=0$ on $\left(-\infty,-\frac{1}{2}\right]$ and $u=1$ on $\left[0, 1\right]$.
        \item On $(-\infty, 1]$, $u$ is non-decreasing, and on $[1,\infty)$, $u$ is decreasing.
        \item We have $u\approx e^{-\varepsilon x}$ for $x$ large enough, for some $\varepsilon>0$.
        \item We have $Au+B\min(u', 0)\geq 0$.
    \end{enumerate}
\end{claim}
\begin{proof} Fix $\varepsilon<\frac{A}{B}$. Let $v:[-1/2,\infty)\to\mathbb{R}$ be a $C^1$ function with the following properties:
    \begin{enumerate}
        \item For some small $\lambda>0$, we have 
        \begin{align*}
            v(x)=v_\text{lower}(x)=-\frac{1}{(2x-1)^2}\text{ for }x\in\left(-\frac{1}{2}, -\frac{1}{2}+\lambda\right]
        \end{align*}            
        \item In the interval $\left(-\frac{1}{2}+\lambda, 2\right)$, the function is defined by $v(x)=v_\mathrm{middle}(x)$, where
        \begin{align*}
            v_\text{middle}'(x)=\left\lbrace \begin{matrix}
                \frac{1-x}{\frac{3}{2}-\varepsilon}\cdot v_\text{lower}'(-1/2+\lambda) & \text{for }-\frac{1}{2}+\lambda<x\leq 1 \\
                -\varepsilon(x-1) & \text{for }1<x<2 
            \end{matrix}\right.
        \end{align*}
        and $v_\text{middle}(-1/2+\lambda)=v_\mathrm{lower}(-1/2+\lambda)$.
        \item For $x\geq 2$, we have $v(x)=v_\mathrm{middle}(2)-\varepsilon (x-2)$.
    \end{enumerate}
    Then it is immediate that the function 
    \begin{align*}
        u=\left\lbrace\begin{matrix} e^v & \text{for }x>-\frac{1}{2}\\ 0 & \text{for }x\leq-\frac{1}{2}\end{matrix}\right.
    \end{align*}
    has properties 1), 2), 3). Note that when $x\geq 1$, we have 
    \begin{align*}
        v'(x)=\max(-\varepsilon, \varepsilon(1-x))\geq -\varepsilon,
    \end{align*} 
    and hence $\frac{u'}{u}\geq-\varepsilon>-\frac{A}{B}$. Therefore $Bu'+Au>0$ when $x\geq 1$. When $x<1$, we have $B\min(u', 0)+Au=Au>0$, so 4) is shown.
\end{proof}
We set $f(x)=\int_0^x u(t-d)dt$. By exponential decay of $u$ at infinity, $f$ is bounded. When $d\geq C$ from Proposition \ref{prop:laplacian-distance}, we have 
\begin{align*}
    \Delta(f\circ\delta)\geq A u(\delta-d)+B\min(u'(\delta-d), 0)\geq 0,
\end{align*}
so $f\circ\delta$ is subharmonic, and moreover $\Delta(f\circ\delta)=\Delta\delta\gtrsim 1$ whenever $d\leq \delta(x)\leq d+1$, or equivalently $x\in N_{d+1}(K)\setminus N_d(K)$. We rescale $f\circ\delta$ to get $\phi_d$.
\end{proof}
The following lemma uses the assumption on the integral of the Green's function to construct $\Phi$ on $N_C(K)$ for $C$ large enough, and hence finishes the construction of $\Phi$.
\begin{lm}\label{lm:greens-estimate-implies-soln}
    Let $S\subseteq\partial_\infty M$ and let $K=\mathrm{CH}(S)$. Then if \[\sup_x\int_{N_{d_0(M)+2}(K)} G(x,y)d\mathrm{vol}(y)<\infty,\] there exists a bounded map $\Phi:M\to\mathbb{R}$ with $\Delta\Phi\geq e^{-a\mathrm{dist}(\cdot, K)}$.
\end{lm}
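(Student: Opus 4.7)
The plan is to decompose $\Phi = \Phi_1 + \Phi_2$, where $\Phi_1$ handles a neighborhood of $K$ via the Green's-function hypothesis, and $\Phi_2$ handles the complement via the shell functions furnished by Proposition \ref{prop:construct-harmonic}. The key observation is that on the shell $N_{d+1}(K) \setminus N_d(K)$ the target $e^{-a\,\mathrm{dist}(\cdot, K)}$ has order $e^{-ad}$, which matches exactly the natural exponential weighting one can put on the $\phi_d$.

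For the far piece, let $\phi_d$ be the functions from Proposition \ref{prop:construct-harmonic}, bounded uniformly by some $M_0$, subharmonic on $M$, and with $\Delta \phi_d \geq 1$ on the shell $N_{d+1}(K) \setminus N_d(K)$. Inspecting the construction $\phi_d \propto \int_0^{\delta(\cdot)} u(t-d)\,dt$, with $u$ supported in $[-1/2, \infty)$ and decaying at infinity, one sees that $\phi_d$ is identically zero on a neighborhood of $K$ of radius of order $d$, so that at each point of $M$ only finitely many $\phi_d$ are nonzero. I would set
\[
\Phi_2(x) = \sum_{d \geq d_0} e^{-ad} \phi_d(x)
\]
for a fixed large $d_0$. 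This pointwise finite sum is uniformly bounded by $M_0 \sum_d e^{-ad}$, subharmonic as a sum of subharmonic functions, and on each shell $N_{d+1}(K) \setminus N_d(K)$ with $d \geq d_0$ the $d$-th term alone contributes at least $e^{-ad}$ to the Laplacian while all others contribute non-negatively, yielding $\Delta \Phi_2 \geq e^{-ad} \gtrsim e^{-a\,\mathrm{dist}(\cdot, K)}$ there.

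For the near piece, pick $C \geq d_0$ (large enough for the Green's-function hypothesis), and define $\Phi_1(x) = -\int_{N_C(K)} G(x, y)\, d\mathrm{vol}(y)$, smoothing the indicator to a bump slightly supported in a $(C+1)$-neighborhood if classical regularity is needed. With the Laplace--Beltrami sign convention $\Delta G(x, \cdot) = -\delta_x$, this gives $\Delta \Phi_1 \equiv \mathbf{1}_{N_C(K)}$, so $\Delta \Phi_1 \geq 1 \geq e^{-a\,\mathrm{dist}(\cdot, K)}$ on $N_C(K)$; and $\Phi_1$ is bounded by hypothesis. Then $\Phi = \Phi_1 + c\,\Phi_2$, for a suitable constant $c > 0$, is bounded and satisfies the desired lower bound on $\Delta \Phi$ throughout: on $N_C(K)$ from $\Phi_1$ (with $\Phi_2$ still subharmonic), and on its complement from $\Phi_2$. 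The main delicate point is the verification that $\Delta \Phi_2$ may be computed termwise and that the sum of the "other" terms stays non-negative pointwise; both follow from the explicit description of the $\phi_d$ in the proof of Proposition \ref{prop:construct-harmonic}, rather than merely from its statement.
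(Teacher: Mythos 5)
Your proposal is correct and is essentially the paper's own proof: the paper likewise sets $\Phi=\sum_{n\geq C}e^{-an}\phi_n-\int_M\chi(y)G(x,y)\,d\mathrm{vol}(y)$ with $\chi$ a smoothed indicator of a neighborhood of $K$, using the subharmonicity of the $\phi_n$ off their shells and the sign $-\Delta\int\chi G=\chi$ exactly as you do. Your extra remark that the sum is locally finite (since $\phi_d$ vanishes where $\delta<d-\tfrac12$) is a correct, if implicit in the paper, justification for the termwise Laplacian computation.
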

\begin{proof}
Let $\chi:M\to [0,1]$ be a smooth map with $\chi=1$ on $N_{d_0+1}(K)$ and $\chi=0$ on $M\setminus N_{d_0+2}(K)$. Then we construct $\Phi$ as 
\begin{align*}
    \Phi(x)=\sum_{n=\lceil d_0\rceil}^\infty e^{-an} \phi_n-\int_{M} \chi(y)G(x,y)d\mathrm{vol}(y). 
\end{align*}
We have $\int_M \chi(y)G(x,y)d\mathrm{vol}(y)\leq\int_{N_{d_0+2}(K)} G(x,y)d\mathrm{vol}(y)$ which is bounded by assumption, so $\Phi$ is bounded. Note that 
\begin{align*}
    -\Delta\int_M \chi(y)G(x,y)d\mathrm{vol}(y)=\chi(x)\geq\left\lbrace\begin{matrix}
        1 & \text{on }N_{d_0+1}(K),\\
        0 & \text{on }M.
    \end{matrix}\right.
\end{align*}
Therefore $\Delta\Phi\gtrsim e^{-a\mathrm{dist}(\cdot, K)}$, so after rescaling we can take $\Delta\Phi\geq e^{-a\mathrm{dist}(\cdot, K)}$.
\end{proof}
\subsection{Using bounded subharmonic functions to finish the proof}\label{subsection:from-phi-to-harmonic-map}
We derive all our Theorems from the following corollary of Lemma \ref{lm:greens-estimate-implies-soln}.
\begin{cor}\label{cor:finish-from-green}
    Let $S\subseteq\partial_\infty M$ and let $K=\mathrm{CH}(S)$. Let $f:M\to N$ be a smooth map between pinched Hadamard manifolds such that
        \begin{align*}
            \norm{\tau(f)}\lesssim e^{-a\mathrm{dist}(\cdot, K)}.
        \end{align*}
    There exists a constant $C=C(M)>0$ such that if \[\sup_x\int_{N_C(K)} G(x,y)d\mathrm{vol}(y)<\infty,\] there exists a harmonic map $h:M\to N$ at a bounded distance from $f$.
\end{cor}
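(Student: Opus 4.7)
The plan is to assemble three ingredients. First, apply Lemma \ref{lm:greens-estimate-implies-soln} and rescale by a sufficiently large constant to obtain a bounded function $\Phi:M\to\mathbb{R}$ with $\Delta\Phi\geq\norm{\tau(f)}$ everywhere, using the hypothesis $\norm{\tau(f)}\lesssim e^{-a\mathrm{dist}(\cdot,K)}$. Second, fix a basepoint $x_0\in M$ and for each $R>0$ solve the Dirichlet problem for harmonic maps on $B(x_0,R)$ with boundary values $f|_{\partial B(x_0,R)}$; since the target $N$ is a Hadamard manifold, a smooth solution $h_R:B(x_0,R)\to N$ exists by the classical theory of Hamilton and Hildebrandt-Kaul-Widman.

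Next, I would upgrade $\Phi$ to a uniform $C^0$ bound on $\mathrm{dist}(h_R,f)$ via the Schoen-Yau estimate recalled in \S\ref{subsection:props-distance}: for any smooth maps $g_1,g_2:M\to N$ into a non-positively curved target,
\begin{align*}
\Delta\mathrm{dist}(g_1,g_2)\geq -\norm{\tau(g_1)}-\norm{\tau(g_2)}.
\end{align*}
Applying this with $g_1=h_R$ (so $\tau(h_R)=0$) and $g_2=f$ gives $\Delta\mathrm{dist}(h_R,f)\geq -\norm{\tau(f)}$, so $F_R:=\mathrm{dist}(h_R,f)+\Phi$ is subharmonic on $B(x_0,R)$. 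Since $\mathrm{dist}(h_R,f)$ vanishes on $\partial B(x_0,R)$, the maximum principle gives
\begin{align*}
\mathrm{dist}(h_R(x),f(x))\leq \sup_M\Phi-\Phi(x)\leq 2\sup_M\abs{\Phi},
\end{align*}
uniformly in $R$ and in $x\in B(x_0,R)$.

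Finally, I would take the limit $R\to\infty$. The uniform distance bound above, together with the fact that $f$ is bounded on any fixed ball $B(x_0,\rho)$, forces $h_R(B(x_0,\rho))$ to lie in a common compact set of $N$ for every $R>\rho+1$. Cheng's gradient estimate for harmonic maps into a manifold of pinched negative sectional curvature then yields a uniform bound on $\norm{\nabla h_R}$ on $B(x_0,\rho)$; Schauder interior estimates applied to the harmonic map system upgrade this to uniform $C^{k,\alpha}_{\mathrm{loc}}$ bounds. Arzela-Ascoli together with a diagonal argument over an exhaustion produces a subsequential $C^2_{\mathrm{loc}}$ limit $h:M\to N$ which is harmonic and inherits the bound $\mathrm{dist}(h(x),f(x))\leq 2\sup_M\abs{\Phi}$.

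The main obstacle is arranging that the derivative estimates are genuinely uniform in $R$ on compact subsets; this relies on the target being a pinched Hadamard manifold so that Cheng's estimate applies with constants depending only on the curvature bounds of $N$ and the $C^0$ distance bound already established. Everything else is assembly of standard ingredients, in the spirit of Donnelly \cite{donelly}.
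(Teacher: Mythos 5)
Your proposal is correct and follows essentially the same route as the paper's proof: construct $\Phi$ via Lemma \ref{lm:greens-estimate-implies-soln}, use the Schoen--Yau inequality and the maximum principle to get a uniform bound $\mathrm{dist}(h_R,f)\leq 2\sup|\Phi|$, then pass to the limit via Cheng's lemma, Schauder estimates, and Arzela--Ascoli. The only additions are cosmetic (explicitly citing existence for the Dirichlet problem and spelling out the boundary comparison), so nothing further is needed.
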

\begin{proof}
    Set $C=d_0(M)+2$, and let $\Phi:M\to\mathbb{R}$ be the function from Lemma \ref{lm:greens-estimate-implies-soln}. We fix an arbitrary $x_0\in M$. For all $d>0$, we let $h_d:B(x_0, d)\to N$ be the harmonic map such that $h_d=f$ on $\partial B(x_0,d)$. Then by \cite{schoen-yau-laplacian-distance}, we have 
    \begin{align*}
        \Delta\mathrm{dist}(h_d, f)\geq -\norm{\tau(f)}\gtrsim -e^{-a\mathrm{dist}(\cdot, K)}.
    \end{align*}
    Hence for a suitable constant $C'>0$ that does not depend on $d$, we have 
    \begin{align*}
    \Delta\left(\mathrm{dist}(h_d, f)+C'\Phi\right)>0.
    \end{align*}
    By the maximum principle, we have 
    \begin{align*}
        \sup_{B(x_0,d)}\mathrm{dist}(h_d, f)\leq 2C'\sup_{M}\abs{\Phi}=:D.
    \end{align*}
    \par For any fixed $x\in M$, for arbitrarily large $n$, $h_n$ maps the ball $B(x, 2)$ to the fixed bounded set $N_{D}(f(B(x, 2)))$. As $h_n$ is harmonic, by Cheng's lemma (see \cite{cheng}), we have \[\sup_n \sup_{y\in B(x, 1)} \norm{\nabla_y h_n}<\infty,\] for any $x\in M$. It follows by the Arzela-Ascoli theorem that there is a sequence $k_n\to\infty$ such that $h_{k_n}\to h_\infty$ as $n\to\infty$, uniformly on compact sets. By the classical elliptic estimates (that can be found in \cite[Theorem 70, p. 303]{petersen})
    \begin{align*}
        \norm{h_n}_{C^{2,\alpha}(B(x,\varepsilon))}&\lesssim \norm{h_{n}}_{C^\alpha(B(x, \varepsilon))},
    \end{align*}
    for $\alpha<1$, so by Arzela-Ascoli applied again, there exists a further subsequence, that we also denote $k_n$, such that $H(h_{k_n})\to H(h_\infty)$ uniformly on compact sets. Therefore $h_\infty$ is also harmonic, is defined everywhere and \begin{align*}\mathrm{dist}(h_\infty(x), f(x))\leq\limsup_{n\to\infty}\mathrm{dist}(h_{k_n}(x), f(x))\leq D\end{align*}
    for any $x\in M$. 
\end{proof}
\begin{remark}
    Note that the assumption that $x\to\int_{N_C(K)}G(x,y)d\mathrm{vol}(y)$ is bounded is only used to construct $\Phi$ on $N_C(K)$. The only place we use dimension bounds on $S$ is to verify this assumption. In particular, if there exists a bounded subharmonic map $\phi:M\to\mathbb{R}$ with $\Delta\phi\geq 1$ on $N_C(K)$, then there exists a harmonic map $h:M\to M$ at a bounded distance from the nearest-point retraction $r:M\to\mathrm{CH}(S)=K$ (with no assumptions on $\overline{\dim}S$).
\end{remark}
\section{Upper bound on the volume of the convex hull within a large ball}\label{section:growth-of-cnvx-hull}
In this section we show that given an upper bound on the invariant upper Minkowski dimension of a set $S\subseteq\partial_\infty M$, we get an upper bound on $\mathrm{vol}(B(x, \rho)\cap N_d(\mathrm{CH}(S)))$ for any $d>0$. \par Recall that the invariant upper Minkowski dimension is defined using the visual metric $\mathrm{dist}_x^\mathrm{vis}(\cdot, \cdot)$, where $x\in M$ is some fixed basepoint, satisfying 
    \begin{align*}
        A^{-1}e^{-a\mathrm{dist}(x, [y,z])}\leq \mathrm{dist}_x^{\mathrm{vis}}(y, z)\leq A e^{-a\mathrm{dist}(x, [y,z])},
    \end{align*}
    for all $y, z\in\partial_\infty M$.
\begin{lm}\label{lm:low-dim-exp-growth}
    Let $S$ be a set in the boundary $\partial_\infty M$. Then for all $x\in M$, we have 
    \begin{align*}
        \mathrm{vol}(B(x, \rho)\cap N_d(\mathrm{CH}(S)))\lesssim e^{a\rho \beta},
    \end{align*}
    for any $\beta>\overline{\dim}S, d>0$, where the implicit constant depends only on $M, d$ and $\beta$.
\end{lm}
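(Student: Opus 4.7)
To prove the bound, I reduce to a fixed basepoint $x_0$ using the cobounded action of $\mathrm{Isom}(M)$, then estimate the cardinality of a maximal $1$-separated subset of $B(x_0, \rho)\cap N_d(\mathrm{CH}(S))$ via an angular covering argument driven by the invariant Minkowski dimension of $\overline{S}$. Choose $\gamma\in\mathrm{Isom}(M)$ with $\mathrm{dist}(\gamma^{-1}x, x_0)\leq R_0$; since $\gamma^{-1}\mathrm{CH}(S)=\mathrm{CH}(\gamma^{-1}S)$, volume is isometry-invariant, and $\overline{\dim}$ is isometry-invariant, I may replace $(x,S)$ by $(x_0, \gamma^{-1} S)$ and $\rho$ by $\rho+R_0$. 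Let $Y$ be a maximal $1$-separated subset of $B(x_0, \rho)\cap N_d(\mathrm{CH}(S))$, so $\mathrm{vol}(B(x_0, \rho)\cap N_d(\mathrm{CH}(S)))\lesssim|Y|$ by the uniform lower bound on unit ball volumes in a pinched Hadamard manifold, and partition $Y=\bigsqcup_{k=0}^{\lceil\rho\rceil}Y_k$ with $Y_k\subseteq B(x_0, k+1)\setminus B(x_0, k)$.

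For $y\in Y_k$, pick $z_y\in\mathrm{CH}(S)$ with $\mathrm{dist}(y, z_y)\leq d$. Using the $\delta$-hyperbolicity fact that in a CAT$(-a^2)$ space $\mathrm{CH}(S)$ lies in a uniformly bounded neighborhood of $\bigcup_{p,q\in\overline{S}}[p,q]$, I obtain $p_y, q_y\in\overline{S}$ and $z_y'\in[p_y, q_y]$ with $\mathrm{dist}(y, z_y')\leq d+\delta_0$, where $\delta_0=\delta_0(M)$. The ideal triangle $\{x_0, p_y, q_y\}$ is $\delta_1(M)$-thin, so $z_y'$, and therefore $y$, lies within a bounded distance $D=D(M,d)$ of one of the rays $[x_0, p_y)$, $[x_0, q_y)$, say the former, at a point of radial distance $\geq k-D$. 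The CAT$(-a^2)$ angle comparison theorem then gives
\begin{align*}
    \mathrm{dist}_{x_0}^\mathrm{vis}(\pi_{x_0}(y), p_y)\lesssim e^{-ak},
\end{align*}
where $\pi_{x_0}(y)\in\partial_\infty M$ is the forward endpoint of the ray from $x_0$ through $y$.

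By the invariant Minkowski dimension hypothesis, $\overline{S}$ admits a cover by at most $Ce^{ak\beta}$ visual balls of radius $e^{-ak}$; after enlarging by a fixed constant, each such ball captures the corresponding subset of $p_y$'s. A direct computation of the volume form in geodesic polar coordinates around $x_0$ shows that the preimage under $\pi_{x_0}$ of a visual ball of radius $e^{-ak}$, intersected with the annulus $B(x_0, k+1)\setminus B(x_0, k)$, has volume $\lesssim 1$ and hence contains only $O(1)$ points of the $1$-separated set $Y$. Therefore $|Y_k|\lesssim e^{ak\beta}$, and summing the (convergent, since $\beta>0$) geometric series yields $|Y|\lesssim e^{a\rho\beta}$, as required. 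The main technical obstacle is the bi-geodesic hull approximation $\mathrm{CH}(S)\subseteq N_{\delta_0}(\bigcup_{p,q\in\overline{S}}[p,q])$ with a constant $\delta_0$ depending only on $M$: this is classical in $\mathbb{H}^n$ and follows in CAT$(-a^2)$ from a Gromov-hyperbolic argument, but must be invoked carefully; an alternative route is to show directly, via horoball separation and the identity $\partial_\infty\mathrm{CH}(S)=\overline{S}$, that $\pi_{x_0}(z)$ lies at visual distance $\lesssim e^{-a\mathrm{dist}(x_0,z)}$ from $\overline{S}$ for every $z\in\mathrm{CH}(S)$.
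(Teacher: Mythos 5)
Your overall strategy is the same as the paper's: replace $\mathrm{CH}(S)$ by a bounded neighborhood of the union of geodesics/rays over $S$, show that points of $N_d(\mathrm{CH}(S))$ at distance about $k$ from the basepoint project into an $O(e^{-ak})$-visual-neighborhood of $S$, cover $S$ by $\lesssim e^{ak\beta}$ visual balls of radius $e^{-ak}$, bound the contribution of each ball to the annulus $B(x_0,k+1)\setminus B(x_0,k)$ by a constant, and sum the resulting geometric series; the reduction to a fixed basepoint by coboundedness is also how the paper gets uniformity in $x$. Counting a maximal $1$-separated set rather than bounding volume directly is cosmetic (note, though, that $\mathrm{vol}(E)\lesssim |Y|$ uses the uniform \emph{upper} bound on unit-ball volumes, not the lower bound you cite).

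The one step whose justification does not go through as written is the claim that ``a direct computation of the volume form in geodesic polar coordinates'' shows that $\pi_{x_0}^{-1}\bigl(B^{\mathrm{vis}}(p, Ce^{-ak})\bigr)\cap\bigl(B(x_0,k+1)\setminus B(x_0,k)\bigr)$ has volume $\lesssim 1$. This works in constant curvature, but when $-b^2\leq K_M\leq -a^2$ with $b>a$ the available comparison bounds pull in opposite directions: the visual $e^{-ak}$-ball about $p$ is only known to be contained in an \emph{angular} ball of radius $\approx e^{-ak}$ (comparison with curvature $-a^2$), while the polar volume form is only bounded above by $(\sinh br)^{n-1}/b^{n-1}$ (comparison with curvature $-b^2$); multiplying these two worst cases gives $e^{(b-a)(n-1)k}$, which is unbounded. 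The correct statement --- and the one the paper proves in Step 4 of its argument, by a compactness/limiting argument --- is that this cone piece has \emph{bounded diameter}: if $\mathrm{dist}(x_0,[p,z])\geq k-C$, then the rays $[x_0,p)$ and $[x_0,z)$ fellow-travel up to distance $k-O(1)$, so every point of the piece lies within bounded distance of the point at distance $k$ on $[x_0,p)$, and the volume bound follows from the bounded diameter rather than from integrating the volume form. You should replace the polar-coordinate sentence with this fellow-traveling (or Gromov-product) argument. Separately, the inclusion $\mathrm{CH}(S)\subseteq N_{\delta_0}\bigl(\bigcup_{p,q}[p,q]\bigr)$ that you flag as the main technical obstacle is exactly what the paper establishes first (using coboundedness and a compactness argument); citing the known quasiconvexity of joins in pinched Hadamard manifolds would be acceptable, but as written it remains an unproved black box in your proposal.
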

We first outline the proof of Lemma \ref{lm:low-dim-exp-growth}. We will estimate the volume of the intersection of $N_d(\mathrm{CH}(S))$ with annuli $\mathrm{An}(R)=B(x, R+1)\setminus B(x, R)$. To achieve this, we cover the set $S$ with balls $B_1, B_2,...,B_N$ of radius $e^{-aR}$ in the visual metric $d_x^\mathrm{vis}$. The proof has three ingredients, sketched below. 
\begin{enumerate}
    \item We first show that $\mathrm{CH}(S)\subseteq N_{C'}(\mathrm{Cone}(x,S))$ for some absolute constant $C'>0$, where $\mathrm{Cone}(x, S)=\bigcup_y [x,y]$. 
    \item We next show that $N_{C}(\mathrm{Cone}(x, B_i))\setminus B(x, R)\subseteq \mathrm{Cone}(x,\tilde{B}_i)$, where $\tilde{B}_i$ is the ball with the same center as $B_i$, but has radius larger by a bounded factor. This result explicitly uses that $B_i$ has radius $e^{-aR}$.
    \item Finally, we show that $\mathrm{Cone}(x, \tilde{B}_i)\cap \mathrm{An}(R)$ has bounded diameter independent of $R$, and hence bounded volume.
\end{enumerate}
Combining these three ingredients, we see that $\mathrm{vol}(B(x, \rho)\cap N_d(\mathrm{CH}(S)))\lesssim N=N(R)\lesssim e^{a\rho \overline{\dim}S}$ by assumption. Uniformity follows from the fact that $\beta$ is required to be strictly larger than the invariant upper Minkowski dimension. The rest of this section is devoted to proving Lemma \ref{lm:low-dim-exp-growth}.
\subsection{Notation} For $S\subseteq\partial_\infty M$, denote by $\mathrm{Cone}(x, S)$ the union of geodesic rays with one endpoint $x$ and the other endpoint (at infinity) in $S$. We denote by $\pi_x:M\setminus\{x\}\to\partial_\infty M$ the projection that maps $y\in M\setminus\{x\}$ to the unique point $z\in\partial_\infty M$ so that $y\in[x, z]$. We also write, for the duration of this proof $\mathrm{An}(R)=B(x, R+1)\setminus B(x, R)$. We also remind the reader that $[a,b]$ denotes the geodesic segment connecting $a,b\in\overline{M}$ (potentially infinite on one or both sides).
\subsection{Esitmating the convex hull with the cone}
The purpose of this subsection is to show the proposition below.
 \begin{prop}\label{prop:ch-subset-cone}
    There exists a constant $C$ such that for all $S\subseteq\partial_\infty M$ and $x\in M$, we have $\mathrm{CH}(S)\subseteq N_C(\mathrm{Cone}(x, S))$. 
\end{prop}
\begin{proof}
        Denote by $\mathrm{GH}(S)$ the union of all geodesics with both endpoints in $S$. Clearly $\mathrm{GH}(S)\subseteq\mathrm{CH}(S)$.
        \begin{claim} For some constant $C'$ depending only on $M$, we have $\mathrm{CH}(S)\subseteq N_{C'}(\mathrm{GH}(S))$. \end{claim}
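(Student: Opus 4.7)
The strategy is to show $\mathrm{GH}(S)$ is uniformly quasiconvex by a thin-triangle argument in CAT$(-a^2)$, and then to upgrade quasiconvexity to the desired bounded-neighborhood conclusion using Gromov-hyperbolicity of $M$.

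For the first step I would use that ideal triangles in a CAT$(-a^2)$ space are $\delta_0$-thin for some $\delta_0$ depending only on $a$, meaning each side lies in the $\delta_0$-neighborhood of the union of the other two. Given $p, q \in \mathrm{GH}(S)$ with $p \in [\xi_1, \xi_2]$ and $q \in [\eta_1, \eta_2]$ for some $\xi_i, \eta_i \in S$, I apply thinness to the ideal triangle with vertices $p, q, \xi_1$: its side $[p, \xi_1]$ lies in $[\xi_1, \xi_2] \subseteq \mathrm{GH}(S)$, so $[p, q] \subseteq N_{\delta_0}(\mathrm{GH}(S) \cup [q, \xi_1])$. To handle $[q, \xi_1]$, I apply thinness again to the ideal triangle $q, \xi_1, \eta_1$: both $[q, \eta_1] \subseteq [\eta_1, \eta_2]$ and $[\xi_1, \eta_1]$ lie in $\mathrm{GH}(S)$, so $[q, \xi_1] \subseteq N_{\delta_0}(\mathrm{GH}(S))$. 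Combining, $[p, q] \subseteq N_{2\delta_0}(\mathrm{GH}(S))$, which is uniform quasiconvexity.

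For the second step I would invoke the classical fact that in a Gromov-hyperbolic geodesic space, the convex hull of a $C$-quasiconvex set lies in a bounded neighborhood of it, with the bound depending only on $C$ and the hyperbolicity constant. Since $M$ is Gromov-hyperbolic with constant depending only on $a$, this yields $\mathrm{CH}(\mathrm{GH}(S)) \subseteq N_{C'}(\mathrm{GH}(S))$ for some $C'$ depending only on $M$. Finally, $\mathrm{CH}(\mathrm{GH}(S)) = \mathrm{CH}(S)$: indeed $\mathrm{GH}(S) \subseteq \mathrm{CH}(S)$ gives $\mathrm{CH}(\mathrm{GH}(S)) \subseteq \mathrm{CH}(S)$, while conversely $\mathrm{GH}(S)$ has $S$ as its visual limit, so any closed convex set containing $\mathrm{GH}(S)$ contains $S$ at infinity and hence contains $\mathrm{CH}(S)$.

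The main obstacle is the second step. Naively iterating ``take all geodesic segments'' starting from $\mathrm{GH}(S)$ grows the neighborhood by roughly $2\delta_0$ at each iteration, because a geodesic between two points in $N_{R}(\mathrm{GH}(S))$ can a priori drift to $N_{R + 2\delta_0}(\mathrm{GH}(S))$, and this bound is sharp in Euclidean geometry. Gromov-hyperbolicity is essential to cut off this accumulation: two geodesic segments with nearby endpoints remain nearby throughout, with exponential contraction in the interior, so that the $C'$-neighborhood of $\mathrm{GH}(S)$ becomes invariant under the geodesic-join operation once $C'$ is chosen large enough in terms of $\delta_0$ and the hyperbolicity constant. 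An explicit version of this argument would either cite a standard $\delta$-hyperbolicity lemma on convex hulls of quasiconvex sets, or reproduce it by tracking a point on $[p,q]$ with $p, q \in N_{C'}(\mathrm{GH}(S))$ via thin-triangle comparisons against a nearby geodesic with endpoints in $\mathrm{GH}(S)$.
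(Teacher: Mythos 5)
Your Step 1 is correct: the thin ideal triangle argument does show that $\mathrm{GH}(S)$ is uniformly quasiconvex, with a constant depending only on the pinching. The gap is Step 2, which is the actual content of the claim. The statement you invoke --- that in a Gromov-hyperbolic geodesic space the convex hull of a quasiconvex set lies in a bounded neighborhood of it --- is not a classical fact in that generality; whether the (genuine, intersection-of-convex-sets) hull of a quasiconvex subset of a $\delta$-hyperbolic geodesic space stays within bounded Hausdorff distance of the set is a well-known open question, which is precisely why one usually works with the ``weak hull'' $\mathrm{GH}(S)$ in coarse settings. Your own diagnosis of the difficulty is accurate, but the proposed repair does not close the iteration: comparing a geodesic $[p,q]$ with $p,q\in N_R(\mathrm{GH}(S))$ to a geodesic $[p',q']$ with $p',q'\in\mathrm{GH}(S)$ and $\mathrm{dist}(p,p'),\mathrm{dist}(q,q')\le R$, convexity of the distance function in $\mathrm{CAT}(0)$ gives only $\mathrm{dist}(z,z')\le R$ for corresponding points, and the $\mathrm{CAT}(-a^2)$ exponential contraction improves this only for $z$ far from both endpoints; near an endpoint the best available bound is $\mathrm{dist}(z,\mathrm{GH}(S))\le R+O(\delta_0)$, so the join operation is not shown to preserve any fixed neighborhood and the $2\delta_0$-per-step accumulation persists. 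The statement you need is true for pinched Hadamard manifolds, but it is a theorem of Bowditch (\emph{Some results on the geometry of convex hulls in manifolds of pinched negative curvature}), whose proof constructs an explicit uniformly convex set sandwiched between the join and a bounded neighborhood of it rather than iterating thin-triangle estimates. As written, Step 2 either cites a lemma that does not exist in the stated generality or silently assumes the result being proved. (In $\mathbb{H}^n$ one can rescue your iteration via the Klein model and Carath\'eodory, which bounds the number of join iterations by a function of $n$, but that does not transfer to a general pinched Hadamard manifold.)

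For comparison, the paper avoids the quantitative question entirely with a compactness argument that uses the standing assumption that $\mathrm{Isom}(M)$ acts with cobounded orbits: if the claim failed, there would be sets $S_n$ and points $x_n\in\mathrm{CH}(S_n)$ with $\mathrm{dist}(x_n,\mathrm{GH}(S_n))\to\infty$; after translating by isometries so that $x_n$ converges to some $x$ in a fixed compact set, every geodesic with both endpoints in $S_n$ misses a large ball around $x$, so the visual diameter of $S_n$ seen from $x$ tends to $0$; a set of small visual diameter is contained at infinity in a metric ball far from $x$, which is convex, forcing $\mathrm{dist}(x,\mathrm{CH}(S_n))\to\infty$ and contradicting $\mathrm{dist}(x,\mathrm{CH}(S_n))\to 0$. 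To repair your proof, either adopt that soft argument or replace your Step 2 with a citation of Bowditch's theorem.
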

        \begin{proof} Suppose not, so that we have a sequence $S_n$ of subsets of $\partial_\infty M$ with points $x_n\in\mathrm{CH}(S_n)$ such that $\mathrm{dist}(x_n, \mathrm{GH}(S_n))\to\infty$. Since the action of $\mathrm{Isom}(M)$ on $M$ has cobounded orbits, let $\Phi$ be a compact subset of $M$ that intersects every orbit. Without loss of generality, we can modify $S_n, x_n$ by an isometry so that $x_n\in\Phi$, for each $n$. We pass to a subsequence of $x_n$ such that $x_n\to x$. Then $\mathrm{dist}(x, \mathrm{GH}(S_n))\to\infty$ and $\mathrm{dist}(x, \mathrm{CH}(S_n))\to 0$, as $n\to\infty$. We equip $\partial_\infty M$ with the visual metric based at $x$. Since $\mathrm{dist}(x, \mathrm{GH}(S_n))\to\infty$, we have 
            \begin{align*}
                \sup_{y, z\in S_n} d_x^\mathrm{vis}(y,z)\approx e^{-a \mathrm{dist}(x, [y, z])}\to 0
            \end{align*}
            as $n\to\infty$. But then $\mathrm{diam}(S_n)\to 0$ and hence $\mathrm{dist}(x, \mathrm{CH}(S_n))\to \infty$, which is a contradiction.
        \end{proof}
        Note that for any $a, b\in S$, since $M$ is $\delta$-hyperbolic (as a metric space) for some $\delta>0$, we have $[a, b]\subseteq N_\delta([x, a]\cup [x, b])$, and therefore 
       \begin{align*}
           \mathrm{GH}(S)\subseteq N_\delta(\mathrm{Cone}(x, S)),
       \end{align*}
       for all $x\in M$ and $S\subseteq \partial_\infty M$. In particular $\mathrm{CH}(S)\subseteq N_{C'+\delta}(\mathrm{Cone}(x, S))$, so we set $C=C'+\delta$. 
    \end{proof}
    \subsection{Estimating neighborhood of a cone}
    In this subsection, we show the following proposition.
    \begin{prop}\label{prop:cone-nbhd}
        For any $R>0$ and constant $C>0$, there exists a constant $\tilde{C}=\tilde{C}(C, M)$, such that for all sets $S\subseteq\partial_\infty M$, we have 
       \begin{align*}
           N_C(\mathrm{Cone}(x, S))\setminus B(x, R)\subseteq \mathrm{Cone}(x, N_{\tilde{C} e^{-aR}}(S)).
       \end{align*}
    \end{prop}
    \begin{proof}
       This is essentially equivalent to the following claim. We remind the reader that $\pi_x(y)$ is the unique point of intersection of the half-ray $xy$ with the boundary at infinity $\partial_\infty M$.
    \begin{claim}\label{claim:diff-dist}
        For any $C>0$, there exists $D=D(C)$ such that for all $x,y,z\in M$ with $\mathrm{dist}(y, z)\leq C$, we have 
        \begin{align*}
            \mathrm{dist}(x, y)-\mathrm{dist}(x, [\pi_x(y),\pi_x(z)])\leq D(C).
        \end{align*}
    \end{claim}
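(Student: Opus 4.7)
The plan is to exploit the Gromov hyperbolicity of $M$, which follows from the upper curvature bound $K_M \leq -a^2$ (with the hyperbolicity constant $\delta$ depending only on $a$). Setting $\xi := \pi_x(y)$ and $\eta := \pi_x(z)$, the strategy is to bound the Gromov product $(\xi|\eta)_x$ from below via the four-point inequality, and then convert this into a lower bound on $\mathrm{dist}(x, [\xi, \eta])$ using the standard approximation of the Gromov product by the distance to the geodesic.

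Recall the Gromov product at $x$, $(p|q)_x := \tfrac12\bigl(\mathrm{dist}(x, p) + \mathrm{dist}(x, q) - \mathrm{dist}(p, q)\bigr)$, extended to ideal points via $\liminf$ along approximating sequences. In a $\delta$-hyperbolic space one has the four-point inequality
\begin{align*}
(p|q)_x \geq \min\bigl((p|r)_x, (r|q)_x\bigr) - \delta
\end{align*}
for any auxiliary $r$ (possibly at infinity), together with the approximation $\mathrm{dist}(x, [p, q]) = (p|q)_x + O(\delta)$ for the geodesic joining $p$ and $q$. Since $y$ lies on the geodesic ray $[x, \xi]$, the product $(y|\xi)_x$ equals $\mathrm{dist}(x, y)$; symmetrically $(z|\eta)_x = \mathrm{dist}(x, z)$.

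Applying the four-point inequality twice yields
\begin{align*}
(\xi|\eta)_x \geq \min\bigl((\xi|y)_x,\ (y|z)_x,\ (z|\eta)_x\bigr) - 2\delta.
\end{align*}
Since $\mathrm{dist}(y, z) \leq C$, the triangle inequality gives $\mathrm{dist}(x, z) \geq \mathrm{dist}(x, y) - C$, so each of the three terms in the minimum is at least $\mathrm{dist}(x, y) - C$. Hence $(\xi|\eta)_x \geq \mathrm{dist}(x, y) - C - 2\delta$, and combining with the above approximation, $\mathrm{dist}(x, [\xi, \eta]) \geq \mathrm{dist}(x, y) - C - O(\delta)$, giving the claim with $C'' := C + O(\delta)$ depending only on $M$ and $C$.

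The main technical point to be careful with is the extension of the four-point inequality and of the Gromov product to ideal boundary points: namely, the well-definedness of the $\liminf$ and the clean propagation of constants. These are standard in Gromov hyperbolic geometry but depend on the precise definitions used. A more hands-on alternative would be to apply $\mathrm{CAT}(-a^2)$ comparison directly to the finite triangle $xyz$: the hyperbolic law of cosines in the comparison triangle $\tilde x \tilde y \tilde z \subset \mathbb{H}(-a^2)$ yields $\sin(\theta/2) \lesssim e^{-a(\mathrm{dist}(x, y) + \mathrm{dist}(x, z))/2}$ for $\theta := \angle_x(y, z)$, which can then be converted via comparison of ideal triangles into the desired lower bound on $\mathrm{dist}(x, [\xi, \eta])$.
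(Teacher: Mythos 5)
Your argument is correct, and it takes a genuinely different route from the paper. The paper proves this claim by contradiction and compactness: it assumes a sequence $x_n,y_n,z_n$ violating the bound, uses the coboundedness of the $\mathrm{Isom}(M)$-action to normalize $y_n$ into a fixed compact set, extracts limits in $\overline{M}$, and then derives a contradiction via $\mathrm{CAT}(-a^2)$ comparison and the hyperbolic law of cosines (showing $B_n+(A_n-C_n)\to 0$ while $A_n-C_n\to\infty$). Your proof instead runs entirely through coarse hyperbolic geometry: two applications of the four-point inequality give $(\xi|\eta)_x\geq\min\bigl((\xi|y)_x,(y|z)_x,(z|\eta)_x\bigr)-2\delta\geq \mathrm{dist}(x,y)-C-2\delta$, and the standard comparison $\mathrm{dist}(x,[\xi,\eta])=(\xi|\eta)_x+O(\delta)$ finishes. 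The computations $(y|\xi)_x=\mathrm{dist}(x,y)$ for $y\in[x,\xi]$ and $(y|z)_x\geq\mathrm{dist}(x,y)-C$ are correct, and the extension of the Gromov product and the four-point inequality to ideal points costs only an additive $O(\delta)$, which you rightly flag as the one technical point; in a $\mathrm{CAT}(-a^2)$ visibility manifold the geodesic $[\xi,\eta]$ exists whenever $\xi\neq\eta$ (and the claim is vacuous otherwise), so there is no issue there. Your approach buys two things: it is quantitative, producing an explicit $C''=C+O(\delta)$ with $\delta$ depending only on the curvature bound, and it does not use the coboundedness of the isometry group at all, whereas the paper's compactness argument does. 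What the paper's argument buys in exchange is that it avoids setting up the boundary extension of Gromov products, working instead with concrete comparison triangles.
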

    \begin{proof}
        Suppose there exist sequences $x_n, y_n, z_n$ with $\mathrm{dist}(y_n, z_n)\leq C$ and 
        \begin{align*}
            \mathrm{dist}(x_n, y_n)-\mathrm{dist}(x_n,[\pi_{x_n}(y_n), \pi_{x_n}(z_n)]) \to\infty\text{ as }n\to\infty.
        \end{align*}
        Let $w_n$ be the nearest point on $[\pi_{x_n}(y_n), \pi_{x_n}(z_n)]$ to $x_n$. 
        After applying an appropriate isometry of $M$, we can suppose $y_n$ lies in a fixed compact set for all $n$. Since $\mathrm{dist}(y_n, z_n)\leq C$, all $z_n$ also lie in a fixed compact set. Therefore we can pass to a subsequence so that $y_n\to y\in M$ and $z_n\to z\in M$. 
        \par  Note that \[\mathrm{dist}(x_n, y_n)-\mathrm{dist}(x_n,[\pi_{x_n}(y_n), \pi_{x_n}(z_n)])\leq \min(\mathrm{dist}(x_n, y_n),\mathrm{dist}(y_n, w_n))\] so the sequences $(x_n)$ and $(w_n)$ converge to some points on the boundary at infinity $\partial_\infty M$. Denote these points $x$ and $w$, respectively. Note that $\pi_{x_n}(y_n)$ converges to the unique point $\pi_x(y)\in\partial_\infty M$ such that $y\in [x, \pi_x(y)]$. We define $\pi_x(z)$ analogously, and observe that $\pi_{x_n}(z_n)\to\pi_x(z)$. Since $w_n$ lies on the geodesic  $[\pi_{x_n}(y_n), \pi_{x_n}(z_n)]$, in the limit we have $w=\pi_x(y)$ or $w=\pi_x(z)$. The claim below then implies that $\pi_x(y)=\pi_x(z)$, i.e. $x, y, z$ all lie on the same geodesic.
        \begin{claim}\label{claim:minima-conv}
            Let $\alpha,\beta,\gamma$ be distinct points in $\partial_\infty M$. Suppose that $\alpha_n, \beta_n, \gamma_n$ are sequences of points in $M$ that converge to $\alpha,\beta,\gamma$, respectively. If $\omega_n$ is the nearest point on $[\beta_n,\gamma_n]$ to $\alpha_n$, then the set $\{\omega_n:n=1,2,...\}$ is bounded.
        \end{claim}
        \begin{proof}
            This follows from the basic properties of horofunctions \cite[Chapter II.8]{Bridson1999MetricSO}, but we sketch the proof for completeness. Let $f_n:(-L_n, R_n)\to M$ be the arc-length parameterization of $[\beta_n, \gamma_n]$, appropriately shifted so that $f_n\to f$, where $f:\mathbb{R}\to M$ is the arc-length parameterization of $[\beta,\gamma]$. We fix an arbitrary basepoint $o\in M$, and consider the functions $d_n:M\to\mathbb{R}$ given by \[d_n(x)=\mathrm{dist}(x, \alpha_n)-\mathrm{dist}(\alpha_n, o).\]
            Then $d_n\to d_\infty$ uniformly on compact sets. In particular, $d_n\circ f_n\to d_\infty\circ f$ uniformly on compact sets. These are all convex functions, and $d_\infty\circ f$ has a unique minimum, so the minima of $d_n\circ f_n$ remain bounded.  
        \end{proof}
        Since $w_n$ are unbounded, by Claim \ref{claim:minima-conv} we see that $\pi_x(y)=\pi_x(z)=w$. Therefore $x,y,z$ lie on the same geodesic. For ease of notation, we assume that $y$ lies between $x$ and $z$.
        \par Denote the angle at $y_n$ between $[y_n, x_n]$ and $[y_n, w_n]$ by $\theta_n$. Write $A_n=\mathrm{dist}(y_n, x_n)$, $B_n=\mathrm{dist}(y_n, w_n)$ and $C_n=\mathrm{dist}(x_n, w_n)$. By the arguments above we see that $\theta_n\to\pi$ as $n\to\infty$. Since $M$ is a CAT$(-a^2)$ metric space, we see that for a triangle with sides $aA_n, aB_n, aC_n$ in $\mathbb{H}^2$, the corresponding angle opposite $C_n$ is at least $\theta_n$. By the hyperbolic law of cosines
        \begin{align*}
            \cosh aC_n\geq\cosh aB_n\cosh aA_n+(-\cos\theta_n)\sinh aB_n\sinh aA_n.
        \end{align*}
        Therefore $\frac{\cosh aC_n}{\cosh a(A_n+B_n)}\to 1$ as $n\to\infty$ and hence $B_n+(A_n-C_n)\to 0$. However by assumption $A_n-C_n\to\infty$, and we get a contradiction since $B_n\geq 0$.
    \end{proof}
    Let $z\in N_C(\mathrm{Cone}(x, S))\setminus B(x, R)$, so that for some $y\in \mathrm{Cone}(x, S)$ we have $\mathrm{dist}(y, z)\leq C$. By Claim \ref{claim:diff-dist}, 
    \begin{align*}\mathrm{dist}(x, [\pi_x(y), \pi_x(z)])\geq \mathrm{dist}(x, y)-D,
    \end{align*}
    so that $\mathrm{dist}_x^\text{vis}(\pi_x(y), \pi_x(z))\leq Ae^{aD}e^{-a\mathrm{dist}(x, y)}$. But $\pi_x(y)\in S$ since $y\in\mathrm{Cone}(x, S)$, and hence $\mathrm{dist}_x^\text{vis}(\pi_x(z), S)\leq Ae^{aD}e^{-a\mathrm{dist}(x, y)}$. Since $\mathrm{dist}(x, z)\geq R$, we have $\mathrm{dist}(x, y)\geq R-\mathrm{dist}(y, z)\geq R-C$ and hence $\mathrm{dist}_x^\text{vis}(\pi_x(z), S)\leq Ae^{a(C+D)}e^{-aR}$. We thus let $\tilde{C}=Ae^{a(C+D(C))}$, and see that $\pi_x(z)\in N_{\tilde{C}}(S)$, and hence $z\in\mathrm{Cone}(x, N_{\tilde{C}}(S))$. Since $\tilde{C}$ or this argument do not depend on the choice of $z$, we are done.
    \end{proof}
    \subsection{Decomposition} 
    In this and the next two subsections, we show Lemma \ref{lm:low-dim-exp-growth} using Propositions \ref{prop:ch-subset-cone} and \ref{prop:cone-nbhd}. 
    \par Set $\varepsilon=e^{-aR}$, and cover the set $S$ by $N=N(\varepsilon)$ balls of radius $\varepsilon$, centered at $y_1, y_2, ..., y_N$. Since $\mathrm{CH}(S)\subseteq N_C(\mathrm{Cone}(x, S))$ by Proposition \ref{prop:ch-subset-cone}, and $S\subseteq\bigcup_{i=1}^N B(y_i, \varepsilon)$, we have 
    \begin{align}
        N_d(\mathrm{CH}(S))\cap\mathrm{An}(R)&\subseteq \mathrm{An}(R)\cap\bigcup_{i=1}^N N_{C+d}(\mathrm{Cone}(x, B(y_i, \varepsilon)))\setminus B(x, R)\nonumber\\
        &\subseteq \mathrm{An}(R)\cap\bigcup_{i=1}^N \mathrm{Cone}(x, B(y_i, \varepsilon+\tilde{C}e^{-aR})),\label{eq:decomp}
    \end{align}
    where we used Proposition \ref{prop:cone-nbhd} in going from the first to the second line. 
    \subsection{Volume bound on cones over visual balls}
    In this subsection, we show that each piece in the decomposition (\ref{eq:decomp}) has bounded volume.
    \begin{claim}\label{claim:bdd-diam}
        Fix a constant $C$. Then for points $x\in M, y\in\partial_\infty M$, define the set 
        \begin{align*}
            S_{R,C}(x, y)=\mathrm{Cone}\left(x, \{z\in\partial_\infty M: \mathrm{dist}(x, [y, z])\geq R\}\right)\cap B(x, R+C)\setminus B(x, R). 
        \end{align*}
        Then the diameter of $S_{R,C}$ is bounded by a constant $D(C)$ depending only on $C$ and $M$.
    \end{claim}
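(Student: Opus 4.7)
The plan is to reduce the claim to a standard fellow-traveler estimate in $\delta$-hyperbolic (equivalently, $\mathrm{CAT}(-a^2)$) geometry by showing that every point of $S_{R, C}(x, y)$ lies within a bounded distance of the single reference point $p_R$, defined as the point on the geodesic ray $[x, y]$ at distance $R$ from $x$. The diameter bound then follows immediately from the triangle inequality, and is independent of the choices of $x$, $y$, and $R$.

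First, since $M$ is $\mathrm{CAT}(-a^2)$, it is Gromov $\delta$-hyperbolic for some $\delta = \delta(M)$. For any $z \in \partial_\infty M$ satisfying $\mathrm{dist}(x, [y, z]) \geq R$, the Gromov product $(y|z)_x$, defined as the usual $\liminf$ over sequences approaching $y$ and $z$, obeys the standard inequality $(y|z)_x \geq \mathrm{dist}(x, [y, z]) - K_1 \geq R - K_1$, where $K_1$ depends only on $\delta$. This yields the fellow-traveler inequality for the two geodesic rays: for all $t \leq R - K_1$, the points $[x, y](t)$ and $[x, z](t)$ are within distance $K_2$ of one another, with $K_2$ again depending only on $\delta$.

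Given $w \in [x, z]$ with $\mathrm{dist}(x, w) = t \in [R, R+C]$, I set $t' := R - K_1$ and consider $w' := [x, z](t')$. Fellow-traveling gives $\mathrm{dist}(w', [x, y](t')) \leq K_2$, while $\mathrm{dist}(w, w') = t - t' \leq C + K_1$ and $\mathrm{dist}([x, y](t'), p_R) = K_1$. Combining via the triangle inequality yields $\mathrm{dist}(w, p_R) \leq C + 2K_1 + K_2$. Since $p_R$ depends only on $x$, $y$, and $R$, applying this estimate to both endpoints of an arbitrary pair in $S_{R, C}(x, y)$ gives $\mathrm{diam}(S_{R, C}(x, y)) \leq 2C + 4K_1 + 2K_2 =: D(C)$, as required.

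The main technical point to verify carefully is the fellow-traveler estimate for geodesic rays terminating at ideal boundary points $y, z \in \partial_\infty M$, as the standard textbook form of the inequality is typically stated for finite triangles. I would handle this either by passing to a limit from the corresponding finite-point statements, or alternatively by direct $\mathrm{CAT}(-a^2)$ comparison with an ideal triangle in $\mathbb{H}^2(-a^2)$, where the estimate can be checked by an explicit calculation in the upper-half-plane model. Either route furnishes constants $K_1, K_2$ depending only on $a$, and hence only on $M$; no use of cocompactness of $\mathrm{Isom}(M)$ is required, in contrast to the arguments in Steps 1 and 2 of the lemma.
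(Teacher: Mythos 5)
Your proof is correct, but it takes a genuinely different route from the paper. The paper argues by contradiction and compactness: it takes sequences $x_n, w_n, y_n, z_n, R_n$ violating the bound, uses the cobounded-orbits hypothesis to translate the reference points $p_n$ into a fixed compact set, passes to subsequential limits in $\overline{M}$, and derives a contradiction from the fact that the limit geodesic $[y,z]$ misses a horoball while $w$ lands in a bounded shell around it. You instead give a direct quantitative argument: the hypothesis $\mathrm{dist}(x,[y,z])\geq R$ forces the Gromov product $(y|z)_x\geq R-K_1$, hence the rays $[x,y]$ and $[x,z]$ fellow-travel up to time $R-K_1$, and every point of $S_{R,C}(x,y)$ lies within $C+2K_1+K_2$ of the single point $p_R\in[x,y]$. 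Your approach yields explicit constants and, as you note, does not use coboundedness of the isometry group at all, so it proves the claim for an arbitrary pinched Hadamard manifold (indeed for any $\delta$-hyperbolic CAT$(-a^2)$ space); the paper's argument is softer and shorter to write given that the same compactness template is reused in the neighbouring claims, but produces no effective constant. Two small points to tidy up: you should dispose of the degenerate range $R<K_1$ separately (there $S_{R,C}\subseteq B(x,K_1+C)$, so the diameter bound is trivial), and the extension of the inequalities $(y|z)_x\geq\mathrm{dist}(x,[y,z])-K_1$ and of the fellow-traveller estimate to ideal endpoints does need the limiting argument you flag, but both are standard and your two proposed routes (limits of finite configurations, or direct comparison with ideal triangles in $\mathbb{H}^2(-a^2)$) each work.
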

    \begin{proof}
        Suppose the diameter of $S_{R,C}(x,y)$ is unbounded. Then there exist sequences $x_n, w_n\in M, y_n, z_n\in \partial_\infty M$ and $R_n>0$ such that 
        \begin{gather*}
            \mathrm{dist}(x_n, [y_n, z_n])\geq R_n,\\ w_n\in[x_n, z_n],\\ R_n\leq\mathrm{dist}(x_n, w_n)\leq R_n+C,\\ \mathrm{dist}(w_n, p_n)\to\infty,
        \end{gather*}
    where $p_n$ is the point on $[x_n, y_n]$ at a distance $R_n$ from $x_n$. By applying an isometry of $M$, we can suppose that $p_n$ is in a fixed compact set. Pass to a subsequence so that $p_n\to p\in M$. Note that the diameter of $S_{R,C}(x,y)$ is at most $2(R+C)$, so in particular $R_n\to\infty$. Hence $\mathrm{dist}(x_n, p_n)=R_n\to\infty$, so we have $x_n\to x\in\partial_\infty M$ along some subsequence. Then $y_n\to y$ with $p\in [y, x]$ and pass to a further subsequence so that $z_n\to z\in\partial_\infty M$ and $w_n\to w\in\overline{M}$.\par Note that since $[y_n, z_n]$ is disjoint from the ball centered at $x_n$ through $p_n$, it follows that $[y, z]$ is disjoint from the horoball $H$ based at $x$ passing through $p$. In particular $y, z\neq x$. We also have $w\in [x, z]\cap N_C(H)\setminus H$ which is a bounded set in $M$, so $\mathrm{dist}(p, w)<\infty$. This is a contradiction. 
    \end{proof}
    Denote by $V(C)$ the maximal volume of a ball of radius $D(C)$. Note that for $z\in B(y_i, \varepsilon+\tilde{C}e^{-aR})$, we have 
    \begin{align*}
        \varepsilon+\tilde{C}e^{-aR}=e^{-aR}(1+\tilde{C})\geq\mathrm{dist}_x^\mathrm{vis}(y_i, z)\geq A^{-1}e^{-a\mathrm{dist}(x, [y_i,z])},
    \end{align*}
    and hence $\mathrm{dist}(x, [y_i, z])\geq R-\frac{1}{a}\log A(1+\tilde{C})$. Therefore   
    \begin{align*}
        B(x, R+1)\cap \mathrm{Cone}(x, B(y_i, \varepsilon(1+AC')))\subseteq S_{R-\frac{1}{a}\log A(1+\tilde{C}), 1+\frac{1}{a}\log A(1+\tilde{C})}(x, y_i),
    \end{align*}
    and hence 
    \begin{align*}
        \mathrm{vol}(\mathrm{An}(R)\cap N_{C+d}(\mathrm{Cone}(x, B(y_i, \varepsilon))))\leq V(1+a^{-1}\log A(1+\tilde{C}))=:V_0(d).
    \end{align*}
    \subsection{Finishing the proof} We now combine previous results to show the main volume estimate. 
    We have
    \begin{align*}
        \mathrm{vol}(N_d(\mathrm{CH}(S))\cap \mathrm{An}(R))&\leq N(e^{-aR}) V_0\lesssim e^{aR\beta} V_0.
    \end{align*}
    Hence we have 
    \begin{align*}
        \mathrm{vol}(N_d(\mathrm{CH}(S))\cap B(x,\rho))&\leq \sum_{r=0}^{\lfloor\rho\rfloor} \mathrm{vol}(\mathrm{CH}(K)\cap\mathrm{An}(r))\lesssim e^{a\rho \beta} V_0(d),
    \end{align*}
    where the implicit constant depends only on $M$ and $\beta$.
\section{Proof of the main results}\label{section:proof}
\subsection{Estimates on the heat kernel}\label{subsection:heat-kernel-estimates}
We collect some estimates on the heat kernel in pinched Hadamard manifolds and hyperbolic spaces we will use to bound $\int_{N_C(K)}G(x,y)d\mathrm{vol}(y)$. 
\par Recall that $H(x, y, t)$ denotes the heat kernel on $M$, for distinct $x,y\in M$ and $t\geq 0$. The connection to Green's function is through the identity 
\begin{align*}
    G(x,y)=\int_0^\infty H(x,y,t)dt.
\end{align*}
\begin{prop}\label{prop:kernel-estimates}
    Assume $t\geq 1$, and denote $\rho=\mathrm{dist}(x, y)$. 
    \begin{enumerate}
        \item On a pinched Hadamard manifold with sectional curvature at most $-a^2$, we have 
        \begin{align*}
            H(x, y, t)\lesssim (1+\rho^n)e^{-\frac{\rho^2}{4t}-\frac{(n-1)^2a^2}{4}t}.
        \end{align*}
        \item On $\mathbb{H}^n$, we have 
        \begin{align*}
            H(x, y, t)\lesssim (1+\rho^n)e^{-\frac{\rho^2}{4t}-\frac{(n-1)^2}{4}t-\frac{n-1}{2}\rho}.
        \end{align*}
    \end{enumerate}
\end{prop}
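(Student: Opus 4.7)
The plan is to invoke standard heat kernel estimates for negatively curved manifolds. Both parts rely on the spectral gap coming from McKean's inequality, and for part (2) we additionally use the explicit Davies–Mandouvalos formula for the heat kernel on hyperbolic space.

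For part (1), I first apply McKean's inequality: on a pinched Hadamard manifold with sectional curvatures at most $-a^2$, the bottom of the $L^2$-spectrum of the Laplacian satisfies $\lambda_0 \geq (n-1)^2 a^2/4$. I then combine this with the Davies–Grigoryan Gaussian upper bound: on a complete Riemannian manifold with spectral gap $\lambda_0$ and bounded geometry (guaranteed here by the curvature pinching and Bishop comparison), one has
\begin{align*}
H(x,y,t) \leq C_\varepsilon\, t^{-n/2} \exp\bigl(-\lambda_0 t - d(x,y)^2/((4+\varepsilon)t)\bigr)
\end{align*}
up to volume factors which are uniformly controlled. Applying the standard shift trick (invoking the Davies bound at a slightly smaller time and absorbing the $\varepsilon$-error into a polynomial factor in $\rho$) yields the claim, with the $t^{-n/2}$ factor harmless since $t\geq 1$.

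For part (2), I invoke the Davies–Mandouvalos asymptotic estimate for the heat kernel on $\mathbb{H}^n$,
\begin{align*}
H_{\mathbb{H}^n}(x,y,t) \asymp t^{-n/2}(1+\rho)(1+\rho+t)^{(n-3)/2}\exp\bigl(-(n-1)^2 t/4 - (n-1)\rho/2 - \rho^2/(4t)\bigr),
\end{align*}
which follows from the explicit spherical transform on $\mathbb{H}^n$. For $t\geq 1$, the prefactor $(1+\rho)(1+\rho+t)^{(n-3)/2}$ is bounded above by $C(1+\rho^n)$, which yields the stated inequality. The extra factor $e^{-(n-1)\rho/2}$ reflects the Abel-transform structure specific to rank-one symmetric spaces and has no counterpart in the general Hadamard case.

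The main obstacle I anticipate is getting the polynomial factor $(1+\rho^n)$ exactly right in part (1): the Davies upper bound in its cleanest form contains an $\varepsilon$ in the Gaussian exponent, and the shift-in-time argument needed to remove it produces a polynomial correction whose precise exponent requires some careful bookkeeping. Once both ingredients are pinned down in the literature, the two parts of the proposition follow immediately and essentially by citation.
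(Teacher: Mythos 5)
Your proposal is correct and follows essentially the same route as the paper: part (1) is McKean's bound $\lambda_0\geq\frac{(n-1)^2a^2}{4}$ combined with a Davies-type Gaussian upper bound, and part (2) is the Davies--Mandouvalos two-sided estimate with the prefactor absorbed into $(1+\rho^n)$ for $t\geq 1$. The only difference is cosmetic: the paper cites Davies' bound in the form $H(x,y,t)\lesssim \mathrm{vol}(B(x,r))^{-1/2}\mathrm{vol}(B(y,r))^{-1/2}e^{-\rho^2/(4t)-\lambda_0 t}$ with $r=\min(1,\sqrt{t},t/\rho)$, which already carries the sharp Gaussian constant and yields $(1+\rho^n)$ directly from Bishop's volume comparison, so the $\varepsilon$-removal bookkeeping you flag as the main obstacle is sidestepped entirely.
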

\begin{proof}
    \begin{enumerate}
        \item 
        Let $\lambda_1(M)$ be the bottom of the spectrum of the negative Laplacian $-\Delta$ on $M$. Note that $\Delta$ is elliptic, so in particular $\lambda_1(M)\geq 0$. We have the bound derived by Davies in \cite{davies-estimate}
    \begin{align*}
        H(x, y, t)\lesssim \frac{1}{\sqrt{\mathrm{vol}(B(x,r))\mathrm{vol}(B(y,r))}} e^{-\frac{\rho^2}{4t}-\lambda_1(M)t}=\overline{H}(\rho, t),
    \end{align*}
    where $r=\min\left(1, \sqrt{t}, \frac{t}{\rho}\right)$. Since $t\geq 1$, we have $r=\min\left(1, \frac{t}{\rho}\right)$. Note that since $r\leq 1$, we have $\mathrm{vol}(B(x, r))\approx r^n$ by Bishop's volume estimates. Therefore 
    \begin{align*}
        \overline{H}(\rho, t)&\approx r^{-n} e^{-\frac{\rho^2}{4t}-\lambda_1(M)t}=\max\left(1, \frac{\rho}{t}\right)^n e^{-\frac{\rho^2}{4t}-\lambda_1(M)t}\\ &\approx \left(\frac{\rho+t}{t}\right)^ne^{-\frac{\rho^2}{4t}-\lambda_1(M)t}\lesssim (1+\rho^n)e^{-\frac{\rho^2}{4t}-\frac{(n-1)^2a^2}{4}t},
    \end{align*}
    where in the last inequality we used $\lambda_1(M)\geq \frac{(n-1)^2a^2}{4}$, as shown by McKean in \cite{lower-bound-laplacian}. 
    \item By \cite[Theorem 3.1]{heat-kernel-hyp-space}, we have 
    \begin{align*}
        H(x, y, t)&\approx \frac{(1+\rho)(1+\rho+t)^\frac{n-3}{2}}{t^\frac{n}{2}}e^{-\frac{\rho^2}{4t}-\frac{(n-1)^2}{4}t-\frac{n-1}{2}\rho}\\
        &\approx \frac{1+\rho}{(2+\rho)^\frac{3}{2}} \left(\frac{1+\rho}{t}+1\right)^\frac{n}{2}e^{-\frac{\rho^2}{4t}-\frac{(n-1)^2}{4}t-\frac{n-1}{2}\rho}\\
        &\lesssim (2+\rho)^\frac{n-1}{2}e^{-\frac{\rho^2}{4t}-\frac{(n-1)^2}{4}t-\frac{n-1}{2}\rho}\\
        &\lesssim (1+\rho^n)e^{-\frac{\rho^2}{4t}-\frac{(n-1)^2}{4}t-\frac{n-1}{2}\rho}.
    \end{align*}
    \end{enumerate} 
\end{proof}
\subsection{Proof of Theorem \ref{thm:main-hyp-space-full}}\label{subsection:main-hyp-space-full}
Let ${r}:\mathbb{H}^n\to\mathrm{CH}(S)=K$ be the nearest-point retraction, and let $\tilde{r}:\mathbb{H}^n\to\mathbb{H}^n$ be the smooth map from Corollary \ref{cor:mollified-r-props}.
\begin{claim}
    There exists $\varepsilon=\varepsilon(d, r)>0$, so that 
    \begin{align*}
        \int_{N_d(K)} H(x, y, t)d\mathrm{vol}(y)\lesssim e^{-\varepsilon t},
    \end{align*}
    uniformly in $x$.
\end{claim}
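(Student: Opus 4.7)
The plan is to decompose $N_d(K)$ into annular shells $N_d(K) \cap \mathrm{An}(R)$, where $\mathrm{An}(R) = B(x, R+1) \setminus B(x, R)$, apply the hyperbolic heat kernel estimate from Proposition~\ref{prop:kernel-estimates}(2) on each shell using that $\mathrm{dist}(x, y) \in [R, R+1]$ there, and sum using the volume bound of Lemma~\ref{lm:low-dim-exp-growth}. Since $\overline{\dim} S < n-1$, I would fix at the outset an auxiliary exponent $\beta$ with $\overline{\dim} S < \beta < n-1$; this $\beta$ is the gadget that will ultimately produce the exponential decay in $t$.

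With this setup, Lemma~\ref{lm:low-dim-exp-growth} (with $a=1$, since we are in $\mathbb{H}^n$) gives $\mathrm{vol}(N_d(K) \cap \mathrm{An}(R)) \lesssim e^{R\beta}$ uniformly in $x$, and on $\mathrm{An}(R)$ the heat kernel is bounded by
\begin{align*}
    H(x,y,t) \lesssim (1+R^n) \exp\!\left(-\tfrac{R^2}{4t} - \tfrac{(n-1)^2}{4}t - \tfrac{n-1}{2}R\right).
\end{align*}
Multiplying and summing yields
\begin{align*}
    \int_{N_d(K)} H(x,y,t)\, d\mathrm{vol}(y) \lesssim e^{-\frac{(n-1)^2}{4}t} \sum_{R=0}^\infty (1+R^n)\, \exp\!\left(-\tfrac{R^2}{4t} + cR\right),
\end{align*}
where $c = \beta - \tfrac{n-1}{2}$. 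The inner sum is handled by completing the square: $-\tfrac{R^2}{4t} + cR = -\tfrac{(R - 2ct)^2}{4t} + c^2 t$, which, combined with the polynomial factor, gives a bound of the form $P(t)\, e^{c^2 t}$ for some polynomial $P$ (compare with a Gaussian integral).

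Putting everything together produces the bound $P(t) \exp\!\left(-\bigl(\tfrac{(n-1)^2}{4} - c^2\bigr) t\right)$. The small-but-essential point — and what I would flag as the main obstacle — is checking that the exponent is strictly negative: the condition $c^2 < \tfrac{(n-1)^2}{4}$ is equivalent to $|\beta - \tfrac{n-1}{2}| < \tfrac{n-1}{2}$, i.e., $0 < \beta < n-1$, which is exactly why the hypothesis $\overline{\dim} S < n-1$ is needed (and why the hyperbolic estimate, with its extra $e^{-\frac{n-1}{2}\rho}$ factor absent in the general pinched Hadamard case, is strong enough to close the argument). Choosing $\varepsilon$ strictly smaller than $\tfrac{(n-1)^2}{4} - c^2$ absorbs the polynomial $P(t)$ for $t \geq 1$ and gives the required uniform bound $\lesssim e^{-\varepsilon t}$.
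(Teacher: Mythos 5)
Your proof is correct and follows essentially the same route as the paper: decompose $N_d(K)$ into annuli, apply the hyperbolic heat kernel bound of Proposition~\ref{prop:kernel-estimates}(2) together with the volume estimate $\mathrm{vol}(B(x,\rho)\cap N_d(K))\lesssim e^{\beta\rho}$ from Lemma~\ref{lm:low-dim-exp-growth}, complete the square, and compare with a Gaussian integral. Your final exponent $\frac{(n-1)^2}{4}-c^2=\beta(n-1-\beta)$ agrees exactly with the paper's choice $0<\varepsilon<\beta(n-1-\beta)$.
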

\begin{proof} 
    By Lemma \ref{lm:low-dim-exp-growth} there exists some $\beta<n-1$ with 
\begin{align*}
\mathrm{vol}(B(x, \rho)\cap K)\lesssim e^{\beta \rho},
\end{align*}
uniformly in $x\in\mathbb{H}^n$.
    \par We have by Proposition \ref{prop:kernel-estimates},
    \begin{align*}
        \int_{N_d(K)} H(x, y, t)d\mathrm{vol}(y)\lesssim\sum_{\rho=1}^\infty (1+\rho^n)e^{-\frac{\rho^2}{4t}-\frac{(n-1)^2}{4}t-\frac{n-1}{2}\rho} \mathrm{vol}(\mathrm{An}(\rho)\cap N_d(K)),
    \end{align*}
    where $\mathrm{An}(\rho)=B(x, \rho)\setminus B(x, \rho-1)$. Since 
    \begin{align*}\mathrm{vol}(\mathrm{An}(\rho)\cap N_d(K))\leq\mathrm{vol}(B(x, \rho)\cap N_d(K))\lesssim e^{\beta\rho},\end{align*}
     we have 
    \begin{align*}
        \int_{N_d(K)} H(x, y, t)d\mathrm{vol}(y)\lesssim \sum_{\rho=1}^\infty (1+\rho^n)e^{-\frac{(n-1)^2}{4}t}\exp\left(-\frac{\rho^2+2t(n-1-2\beta)\rho}{4t}\right)\\
        \lesssim e^{-\frac{(n-1)^2-(n-1-2\beta)^2}{4}t}\sum_{\rho=1}^\infty (1+\rho^n) \exp\left(-\frac{(\rho+(n-1-2\beta)t)^2}{4t}\right) \\
        \lesssim e^{-\beta (n-1-\beta) t}\int_1^\infty (1+\rho^n) \exp\left(-\frac{(\rho+(n-1-2\beta)t)^2}{4t}\right)d\rho.
    \end{align*}
    The final integral grows at most polynomially in $t$, so the claim holds for any $0<\varepsilon<\beta(n-1-\beta)$.
\end{proof}
It follows that 
\begin{align*}
     \int_{N_d(K)} \int_0^\infty H(x,y,t)\norm{\tau(\tilde{r})(y)} dt d\mathrm{vol}(y)\lesssim 1,
\end{align*}
uniformly in $x$, as $\norm{\tau(\tilde{r})}\lesssim 1$. Since $G(x,y)=\int_0^\infty H(x,y,t)dt$, we are done by Corollary \ref{cor:finish-from-green}.
\subsection{Proof of Theorem \ref{thm:main-hyp-space}}\label{subseciton:main-hyp-space}
    Let $S$ be the image of $\partial\iota:S^1\to\partial\mathbb{H}^n$, and let $K$ be the convex hull of $S$. 
    \begin{claim}
        For some $d$ large enough, there exists a Lipschitz map $f:N_d(K)\to\mathbb{H}^2$ such that 
        \begin{align*}
            \sup_{x\in\mathbb{H}^2}\mathrm{dist}(f\circ\iota(x), x)<\infty.
        \end{align*}
    \end{claim}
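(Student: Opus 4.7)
The plan is to build $f$ as a smoothly interpolated coarse inverse of $\iota$ on $N_d(K)$, using Morse stability to guarantee that every point of $N_d(K)$ has a preimage candidate in $\mathbb{H}^2$ at uniformly bounded distance.

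First I would establish that $\iota(\mathbb{H}^2)$ and $K$ sit at finite Hausdorff distance $C_1$ inside $\mathbb{H}^n$. The inclusion $\iota(\mathbb{H}^2)\subseteq N_{C_1}(K)$ is stability of quasi-geodesics in $\mathbb{H}^n$: each $\iota$-image of an $\mathbb{H}^2$-geodesic is a quasi-geodesic with endpoints in $S$, hence tracks the genuine $\mathbb{H}^n$-geodesic with the same endpoints, and that geodesic lies in $\mathrm{GH}(S)\subseteq K$. The reverse inclusion $K\subseteq N_{C_1}(\iota(\mathbb{H}^2))$ follows from the same Morse-lemma argument together with the fact $K\subseteq N_{C'}(\mathrm{GH}(S))$ proved in Step 1 of Lemma \ref{lm:low-dim-exp-growth}. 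I fix $d>C_1+1$ so that in particular $\iota(\mathbb{H}^2)\subseteq N_d(K)$.

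Next I would select a coarse preimage. Pick a maximal $1$-separated subset $\{y_\alpha\}_{\alpha\in A}$ of $N_d(K)$, and for each $\alpha$ choose $x_\alpha\in\mathbb{H}^2$ with $\mathrm{dist}(\iota(x_\alpha),y_\alpha)\leq d+C_1$, which exists by the previous paragraph. Whenever $\mathrm{dist}(y_\alpha,y_\beta)\leq 3$, the points $\iota(x_\alpha),\iota(x_\beta)$ lie within $2(d+C_1)+3$ of each other, so the quasi-isometric embedding property of $\iota$ gives $\mathrm{dist}(x_\alpha,x_\beta)\leq\Lambda$ for a constant $\Lambda$ depending only on $\iota$ and $d$.

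Then I would interpolate. Let $\psi:[0,\infty)\to[0,1]$ be smooth with $\psi\equiv 1$ on $[0,\tfrac12]$ and $\psi\equiv 0$ on $[2,\infty)$, and set $\psi_\alpha(y)=\psi(\mathrm{dist}(y,y_\alpha))$. By the $1$-net property and bounded geometry, only a uniformly bounded number of $\psi_\alpha$ are nonzero near any given $y$, and $\sum_\beta\psi_\beta(y)$ is bounded below on $N_d(K)$. Define $f(y)\in\mathbb{H}^2$ as the Karcher barycenter of the probability measure with weights $\psi_\alpha(y)/\sum_\beta\psi_\beta(y)$ on the points $x_\alpha$. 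In the $\mathrm{CAT}(-1)$ space $\mathbb{H}^2$ this barycenter is well defined and depends smoothly on the weights as long as the support has bounded diameter, which by Step 2 holds with diameter at most $2\Lambda$; hence $f$ is smooth with uniformly bounded derivative, hence Lipschitz. Finally, for $x\in\mathbb{H}^2$ and $y=\iota(x)$, every $\alpha$ contributing to $f(y)$ has $\mathrm{dist}(y_\alpha,\iota(x))\leq 2$, so $\mathrm{dist}(\iota(x_\alpha),\iota(x))\leq d+C_1+2$, and the quasi-isometric embedding property gives $\mathrm{dist}(x_\alpha,x)\leq\Lambda'$ uniformly; the barycenter $f(\iota(x))$ then lies within $\Lambda'$ of $x$.

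The main technical ingredient is the Hausdorff estimate of the first step, which is a standard consequence of Morse stability in $\mathbb{H}^n$; the remaining smoothing of the coarse inverse by a partition of unity and barycentric averaging is routine once the bounded-oscillation estimate of Step 2 is in hand.
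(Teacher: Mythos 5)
Your proof is correct, and its core coincides with the paper's: both arguments establish that $\iota:\mathbb{H}^2\to N_d(K)$ is quasi-surjective by combining the Morse lemma (so that $\iota(\mathbb{H}^2)$ and $\mathrm{GH}(S)$ are at finite Hausdorff distance) with the inclusion $K\subseteq N_{C'}(\mathrm{GH}(S))$ from Step 1 of Lemma \ref{lm:low-dim-exp-growth}, and then upgrade a coarse inverse of $\iota$ to a Lipschitz map. The only divergence is in that last upgrade: the paper invokes the smoothing construction of Benoist--Hulin \cite[Proposition 2.4]{benoist-hulin-pinched-hadamard} applied to a set-theoretic quasi-inverse $\tilde f$, whereas you build the Lipschitz map by hand, choosing preimage candidates over a $1$-net of $N_d(K)$ and interpolating with a partition of unity and Karcher barycenters in $\mathbb{H}^2$. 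Your version is self-contained and makes the dependence of the Lipschitz constant on $d$ and on the quasi-isometry constants of $\iota$ explicit; the paper's is shorter because the mollification machinery is already needed elsewhere (Lemma \ref{lm:benoist-hulin}) and can simply be reused. Two cosmetic points: with $\psi$ supported in $[0,2]$, the bumps $\psi_\alpha,\psi_\beta$ can be simultaneously nonzero whenever $\mathrm{dist}(y_\alpha,y_\beta)\leq 4$ rather than $3$, so $\Lambda$ should be computed with that constant; and you should require $\psi>0$ on $[0,2)$ (or take $\psi\equiv 1$ on $[0,1]$) to get the lower bound on $\sum_\beta\psi_\beta$ from maximality of the net. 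Neither affects the argument.
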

    \begin{proof}
        Note that $\mathbb{H}^2=\bigcup_{z\in S^1}[-z, z]$, so that 
        \begin{align*}
            \iota(\mathbb{H}^2)=\bigcup_{z\in S^1}\iota([-z, z]).
        \end{align*}
        Each $\iota([-z,z])$ is a quasigeodesic with the same constants, so Morse lemma implies that $\iota([-z, z])\subseteq N_C([\iota(-z), \iota(z)])$ for some $C>0$ that depends only on quasi-isometry constants of $\iota$. Therefore $\iota(\mathbb{H}^2)\subseteq N_C(K)$. Similarly we have $[\iota(z), \iota(w)]\subseteq N_{C}([z, w])$, so $\mathrm{GH}(S)\subseteq N_C(\iota(\mathbb{H}^2))$. We have already shown as part of the first Claim of Lemma \ref{lm:low-dim-exp-growth} that $K\subseteq N_{C'}(\mathrm{GH}(S))$ for some constant $C'>0$ that depends only on $n$, so that $K\subseteq N_{C+C'}(\iota(\mathbb{H}^2))$, and therefore the quasi-isometric embedding $\iota:\mathbb{H}^2\to N_C(K)$ is quasisurjective.
        \par Therefore there exists a quasi-inverse $\tilde{f}:N_{C}(K)\to \mathbb{H}^2$, for all $C$ large enough, meaning $\sup_{x\in \mathbb{H}^2}\mathrm{dist}(x, \tilde{f}\circ\iota(x))<\infty$. We in fact construct a quasi-inverse on a larger set $\tilde{f}:N_{C+1}(K)\to\mathbb{H}^2$, so that by the construction of Benoist and Hulin from \cite[Proposition 2.4]{benoist-hulin-pinched-hadamard} we can construct a Lipschitz map $f:N_C(K)\to\mathbb{H}^2$ so that $\sup_{x}\mathrm{dist}(\tilde{f}(x), f(x))<\infty$. Then $f$ is a Lipschitz quasi-inverse of $\iota$, as claimed.
    \end{proof}
    Now let $r:\mathbb{H}^n\to K$ be the nearest-point retraction, and write $\hat{r}=f\circ {r}$, for some $f$ as in the Claim. Then $\hat{r}:\mathbb{H}^n\to\mathbb{H}^2$ is Lipschitz with 
    \begin{align*}
        \mathrm{Lip}\left(\hat{r}|_{\mathbb{H}^n\setminus N_d(K)}\right)\lesssim e^{-ad},
    \end{align*}
    for all $d>0$, by Proposition \ref{prop:nearest-point-lipschitz}. By Lemma \ref{lm:benoist-hulin} there exists a smooth map $\tilde{r}:\mathbb{H}^n\to\mathbb{H}^2$ so that 
    \begin{align*}
        \sup_{x\in\mathbb{H}^n}\mathrm{dist}(\tilde{r}&(x), \hat{r}(x))<\infty,\\
        \norm{\nabla_x \tilde{r}}&\lesssim e^{-a\mathrm{dist}(x, K)},\\
        \norm{H(\tilde{r})_x(X, X)}&\lesssim e^{-a\mathrm{dist}(x, K)} \norm{X}^2,
    \end{align*}
    for all $x\in\mathbb{H}^n, X\in T_x\mathbb{H}^n$. It follows from the first inequality and the Claim that 
    \begin{align*}
        \sup_{x\in\mathbb{H}^2} \mathrm{dist}(x, \tilde{r}\circ\iota(x))<\infty.
    \end{align*}
    Note that each set in $\{\gamma S:\gamma\in\mathrm{Isom}(\mathbb{H}^n)\}$ is a quasicircle with the same quasisymmetry constants as $S$, so by \cite[Theorem 18, Lemma 16]{gehring} we see that 
    \begin{align*}
        \overline{\dim}^{\mathrm{Isom}(\mathbb{H}^n)} S<n-1.
    \end{align*}
    Therefore we can apply the Claim from \S\ref{subsection:main-hyp-space-full} to get
    \begin{align*}
         \int_{N_d(K)} \int_0^\infty H(x,y,t)\norm{\tau(\tilde{r})(y)} dt d\mathrm{vol}(y)\lesssim 1,
    \end{align*}
    uniformly in $x$, as $\norm{\tau(\tilde{r})}\lesssim 1$. Since $G(x,y)=\int_0^\infty H(x,y,t)dt$, by Corollary \ref{cor:finish-from-green}, we are done.
\subsection{Proof of Theorem \ref{thm:main-hadamard}}\label{subsection:main-hadamard}
    As always, let $K$ be the convex hull of $S$.
    \begin{claim}
        There exists $\varepsilon=\varepsilon(K, d)$, so that 
        \begin{align*}
            \int_{N_d(K)} H(x,y,t)d\mathrm{vol}(y)\lesssim e^{-\varepsilon t},
        \end{align*}
        uniformly in $x$.
    \end{claim}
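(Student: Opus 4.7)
The plan is to follow the same strategy as the analogous Claim in \S\ref{subsection:main-hyp-space-full}, substituting the hyperbolic-space heat kernel bound with the weaker pinched Hadamard estimate from Proposition \ref{prop:kernel-estimates}(1). Since $\overline{\dim} S < \frac{n-1}{2}$ by the hypothesis of Theorem \ref{thm:main-hadamard}, I first fix some $\beta$ satisfying $\overline{\dim} S < \beta < \frac{n-1}{2}$, and apply Lemma \ref{lm:low-dim-exp-growth} to obtain
\begin{align*}
    \mathrm{vol}(B(x, \rho) \cap N_d(K)) \lesssim e^{a\beta\rho},
\end{align*}
uniformly in $x \in M$.

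Next I decompose $N_d(K)$ by the annuli $\mathrm{An}(\rho) = B(x, \rho) \setminus B(x, \rho - 1)$ as in \S\ref{subsection:main-hyp-space-full}, and apply Proposition \ref{prop:kernel-estimates}(1) to bound
\begin{align*}
    \int_{N_d(K)} H(x, y, t)\, d\mathrm{vol}(y) &\lesssim \sum_{\rho = 1}^\infty (1 + \rho^n) e^{-\frac{\rho^2}{4t} - \frac{(n-1)^2 a^2}{4} t}\, \mathrm{vol}(\mathrm{An}(\rho) \cap N_d(K)) \\
    &\lesssim \sum_{\rho = 1}^\infty (1 + \rho^n) \exp\left(-\frac{\rho^2}{4t} - \frac{(n-1)^2 a^2}{4} t + a\beta\rho\right).
\end{align*}
Completing the square via $-\frac{\rho^2}{4t} + a\beta\rho = -\frac{(\rho - 2a\beta t)^2}{4t} + a^2 \beta^2 t$ factors this as
\begin{align*}
    \lesssim e^{-a^2 \left(\frac{(n-1)^2}{4} - \beta^2\right) t} \sum_{\rho = 1}^\infty (1 + \rho^n) \exp\left(-\frac{(\rho - 2a\beta t)^2}{4t}\right),
\end{align*}
and the remaining sum, compared against a Gaussian-type integral on $[1, \infty)$, grows only polynomially in $t$.

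Because $\beta < \frac{n-1}{2}$, the coefficient $a^2 \left(\frac{(n-1)^2}{4} - \beta^2\right)$ is strictly positive, and any $\varepsilon$ strictly smaller absorbs the polynomial correction and yields the desired bound on the regime $t \geq 1$ covered by Proposition \ref{prop:kernel-estimates}. The entire content of the dimension hypothesis $\overline{\dim} S < \frac{n-1}{2}$ enters at exactly this positivity check; this is the only delicate step. Conceptually, the hyperbolic-space argument of \S\ref{subsection:main-hyp-space-full} could afford the weaker hypothesis $\overline{\dim} S < n - 1$ thanks to the extra $e^{-\frac{n-1}{2}\rho}$ factor in the sharp heat kernel on $\mathbb{H}^n$; in the general pinched Hadamard setting this factor is absent, which is what forces the stronger dimension assumption here.
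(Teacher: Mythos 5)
Your proposal is correct and follows essentially the same route as the paper: fix $\overline{\dim}S<\beta<\tfrac{n-1}{2}$, invoke Lemma \ref{lm:low-dim-exp-growth} for the volume bound $e^{a\beta\rho}$, sum over annuli against the Davies--McKean estimate of Proposition \ref{prop:kernel-estimates}(1), and complete the square to extract the prefactor $e^{-a^2(\frac{(n-1)^2}{4}-\beta^2)t}$, which is decaying precisely because $\beta<\tfrac{n-1}{2}$. Your closing remark correctly identifies why the sharper hypothesis $\overline{\dim}S<n-1$ suffices in the $\mathbb{H}^n$ case.
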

    \begin{proof}
        Let $\overline{\dim} S<\beta<\frac{n-1}{2}$. Note that by Lemma \ref{lm:low-dim-exp-growth} we have 
        \begin{align*}
            \mathrm{vol}(B(x, \rho)\cap N_d(K))\lesssim e^{a\rho\beta},
        \end{align*}
        uniformly in $x$. Therefore by Proposition \ref{prop:kernel-estimates}, we have 
        \begin{align*}
            \int_{N_d(K)} H(x, y, t)d\mathrm{vol}(y)&\lesssim\sum_{\rho=1}^\infty (1+\rho^n) e^{-\frac{\rho^2}{4t}-\frac{(n-1)^2a^2}{4t}} \mathrm{vol}(\mathrm{An}(\rho)\cap N_d(K))\\
            &\leq \sum_{\rho=1}^\infty (1+\rho^n) e^{-\frac{\rho^2}{4t}-\frac{(n-1)^2a^2}{4t}} \mathrm{vol}(B(x,\rho)\cap N_d(K))\\
            &\lesssim \sum_{\rho=1}^\infty (1+\rho^n) e^{-\frac{\rho^2-4ta\beta\rho}{4t}-\frac{(n-1)^2a^2}{4}t}\\
            &=e^{-a^2t\left(\frac{(n-1)^2}{4}-\beta^2\right)}\sum_{\rho=1}^\infty (1+\rho^n) \exp\left(-\frac{(\rho-2ta\beta)^2}{4t}\right)\\
            &\lesssim e^{-a^2t\left(\frac{(n-1)^2}{4}-\beta^2\right)}\int_1^\infty(1+\rho^n) \exp\left(-\frac{(\rho-2ta\beta)^2}{4t}\right).
        \end{align*}
        The integral in the final line grows at most polynomially, so the Claim holds for any $0<\varepsilon<a^2 \left(\frac{(n-1)^2}{4}-\beta^2\right)$.
    \end{proof}
    Let $r:M\to K$ be the nearest-point retraction, and let $\tilde{r}:M\to M$ be as in Corollary \ref{cor:mollified-r-props}. Fix an arbitrary $d>0$. We have
    \begin{align*}
        \int_{N_d(K)}\int_0^\infty  H(x,y,t)\norm{\tau(\tilde{r})(y)}dt d\mathrm{vol}(y)\lesssim 1,
    \end{align*}
    uniformly in $x$, as $\norm{\tau(\tilde{r})}\lesssim 1$. Since $G(x,y)=\int_0^\infty H(x,y,t)dt$, by Corollary \ref{cor:finish-from-green}, we are done.
    
\bibliographystyle{amsplain}
\bibliography{harmonic-projection}
\end{document}